\theoremstyle{plain}
\newtheorem{theorem}{Theorem}
\newtheorem{corollary}{Corollary}
\newtheorem{lemma}{Lemma}
\newtheorem{proposition}{Proposition}
\theoremstyle{definition}
\newtheorem{definition}{Definition}
\theoremstyle{example}
\theoremstyle{remark}
\numberwithin{equation}{section}
\begin{document}

\title{Shapes of topological RNA structures}
%\subtitle{Do you have a subtitle?\\ If so, write it here}

\author{ Fenix W.D. Huang \and Christian M.~Reidys$^{\star}$
        %etc.
}

%\authorrunning{Short form of author list} % if too long for running head

\address{Department of Mathematics and Computer Science,
              University of Southern Denmark,
              Campusvej 55, DK-5230
             Odense M, Denmark\\
              Tel.$^*$: +45-24409251\\
              Fax$^*$: +45-65502325\\
              E-mail$^*$:  duck@santafe.edu
}
\date{Received: date / Accepted: date}
% The correct dates will be entered by the editor

\maketitle

\begin{abstract}
A topological RNA structure is derived from a diagram and its shape is
obtained by collapsing the stacks of the structure into single arcs and
by removing any arcs of length one. Shapes contain key topological,
information and for fixed topological genus there exist only finitely
many such shapes.
We shall express topological RNA structures as unicellular maps,
i.e.~graphs together with a cyclic ordering of their half-edges.
In this paper we prove a bijection of shapes of topological RNA
structures. We furthermore derive a linear time algorithm generating
shapes of fixed topological genus.
We derive explicit expressions for the coefficients of the generating
polynomial of these shapes and the generating function of RNA structures
of genus $g$. Furthermore we outline how shapes can be used in order to
extract essential information of RNA structure databases.
\end{abstract}

%%%
%%%%%%%%%%%%%%%%%%%%%%%%%%%%%%%%%%%%%%%%%%%%%%%%%%%%%%%%%%%%%%%%%%%%%%%%%%%%%%%%%%%%%%%%%%%%%
%%%

\section{Introduction} \label{S:Into}

%%%
%%%%%%%%%%%%%%%%%%%%%%%%%%%%%%%%%%%%%%%%%%%%%%%%%%%%%%%%%%%%%%%%%%%%%%%%%%%%%%%%%%%%%%%%%%%%%
%%%

Pseudoknots have long been known as important structural elements in RNA
\cite{Westhof:92a}. These cross-serial interactions between RNA nucleotides
are functionally important in tRNAs, RNaseP \cite{Loria:96a}, telomerase
RNA \cite{Staple:05}, and ribosomal RNAs \cite{Konings:95a}.
Pseudoknots in plant virus RNAs mimic tRNA structures, and {\it in vitro}
selection experiments have produced pseudoknotted RNA families that bind
to the HIV-1 reverse transcriptase \cite{Tuerk:92}.

Since the prediction of general RNA pseudoknot structures is NP-complete
\cite{Lynsoe:00}, one frequently sticks to certain subclasses of pseudoknots,
suitable for the dynamic programming paradigm \cite{Rivas:99, Reidys:11a}.

In \cite{Reidys:11a} a folding algorithm, {\tt gfold}, for one such class of
RNA structures has been presented. This class consists of structures of fixed
topological genus. The topological filtration of RNA structures has first
been proposed by Penner and Waterman in \cite{Waterman:93} and later, as an
application of the Matrix model in \cite{Orland:02} and \cite{Bon:08}. In
\cite{Reidys:11a,Reidys:top1} a representation theoretic Ansatz is employed
that traces back to Zagier \cite{Zagier:95}. \cite{Reidys:top1} connects
RNA shapes of fixed topological genus with Riemann's moduli space.

RNA structures are represented as diagrams, that is as labeled graphs
over the vertex set $[n]=\{1, \dots, n\}$ with vertex degrees $\le 3$,
represented by drawing its vertices on a horizontal line and its arcs
$(i,j)$ ($i<j$), in the upper half-plane, see Figure~\ref{F:struc-shape} (A).
We assume the vertices to be connected by the edges $\{i,i+1\}$, $1\le i<n$,
which are not considered to be arcs (but contribute to a nodes's degree).
Furthermore, vertices and arcs correspond to the nucleotides {\bf A}, {\bf G},
{\bf U} and {\bf C} and Watson-Crick base pairs ({\bf A-U}, {\bf G-C}) or
wobble base pairs ({\bf U-G}), respectively.
Considering only the Watson-Crick and wobble base pair RNA structures, we
set the restriction that one vertex can only paired with at most another
vertex. Let $i<r$, we call arcs $(i,j)$ and $(r,s)$ crossing if $i<r<j<s$ holds.
In this representation a pseudoknot-free secondary structure is a diagram
without crossing arcs. Otherwise, i.e.~diagrams with crossings represent
pseudoknot structures.
The above mentioned topological folding algorithm, {\tt gfold}, depends
crucially on RNA shapes. These are obtained
(recursively) by collapsing stacks into arcs and by removing any $1$-arc.
Shapes are obtained by considering homotopy-classes of arcs and represent
thereby the ``key'' topological information that lies within the original
structure, see Figure~\ref{F:struc-shape} (B). RNA shapes are the central
determinant of the multiple-context free language of topological RNA
structures.

%%%%%%%%%%
\begin{figure}[ht]
\begin{center}
\includegraphics[width=0.8\columnwidth]{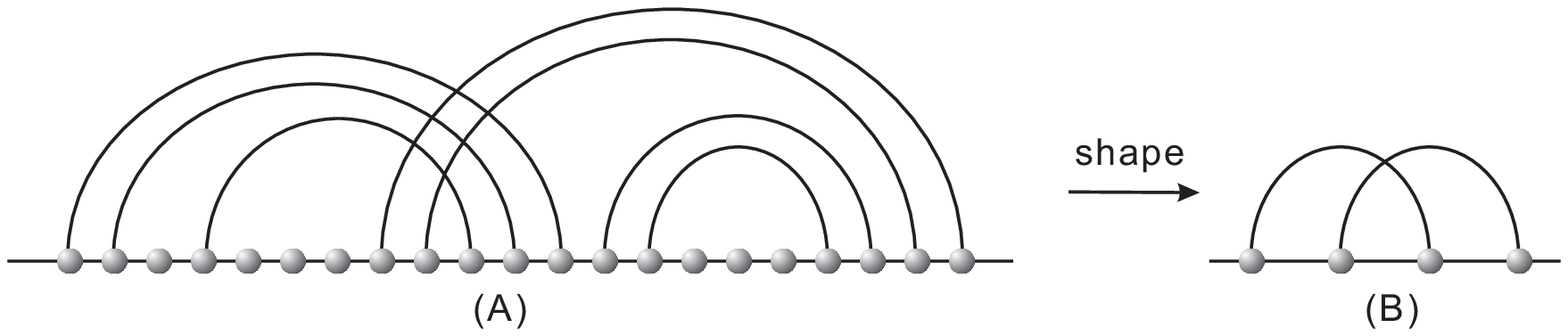}
\end{center}
\caption{\small From a diagram (A) to its shape (B).
}
\label{F:struc-shape}
\end{figure}
%%%%%%%%%%

In \cite{Reidys:11a} it identifies a particular topological fact, crucial for
folding. That is, for fixed topological genus, there exist only finitely
many shapes. This immediately implies that, despite the fact that there
are infinitely many RNA structures of fixed topological genus, the
generating function can be reduced to a generating polynomial. We shall
refer to this polynomial as the shape polynomial. While the situation is
fairly easy for genus one \cite{Reidys:11a}, see Figure~\ref{F:4shape}, for
higher genera it is not trivial to obtain the shapes.

%%%%%%%%%%
\begin{figure}[ht]
\begin{center}
\includegraphics[width=0.8\columnwidth]{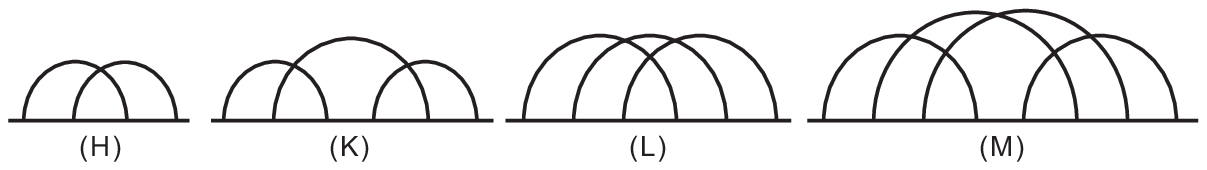}
\end{center}
\caption{\small The four shapes employed in the topological folding algorithm {\tt gfold}.
}
\label{F:4shape}
\end{figure}
%%%%%%%%%%

Interestingly, more than $95\%$ of all known RNA-pseudoknot structures
are build very ``regularly''. They are derived from the aforementioned $4$
shapes by means of concatenation and nesting.
This observation has led to the notion of $\gamma$-structures
\cite{Han:13}, obtained as concatenation and nesting of shapes of
genus less than $\gamma$. Thus, despite the fact that the overall genus of
$\gamma$-structures is arbitrary, they are composed by finitely many blocks of
at most genus $\gamma$-complexity.

This fits well with what we know about RNA secondary structures: these are build by
concatenation
and nesting of simple arcs. Topological RNA Structures generalize this in
a natural way, utilizing novel building blocks, more complex than simple arcs,
i.e.~RNA shapes described in the following.
The problem is thus reduced to finding and analyzing shapes, whose numbers
increase rapidly with increasing genus, see Table~\ref{T:shapes}.

%%%%%%%%%%%%
\begin{table}
\begin{center}
\begin{tabular}{c|ccccc}
 & g=1 & 2 & 3 & 4 & 5 \\
\hline
   & 4 & 3696 & 15214144 & 148120104704 & 2638025019442176 \\
\end{tabular}
\end{center}
\caption{\small The number of shapes of fixed genus $g$.
}\label{T:shapes}
\end{table}
%%%%%%%%%%%

Recently, a linear time uniform random sampler for pseudoknotted RNA structures of
given topological genus has been presented \cite{Huang:13}. Unfortunately this
framework cannot be used for shapes.

In this paper we present a linear time, uniform sampling algorithm for shapes of fixed
topological genus. The core idea traces back to a bijection of Chapuy
\cite{Chapuy:11}, that reduces genus by recursive ``splicing'' of certain
vertices. In difference to the aforementioned uniform sampling \cite{Huang:13},
the work is based on a specific refinement. Namely, here we keep track
of the labeled vertices produced by slicing over many such processes. This enhancement
enables us to establish new recursions, which allow us to uniformly generate shapes
from trees with a specific number of labeled vertices. The process requires us to
characterize which trees actually generate shapes (shape-trees). To this end we show that
a bijection of R\'{e}my \cite{Remy:85} is compatible with shape-trees and can therefore
be restricted. As a result we can give an explicit formula for the coefficients of the
shape polynomial.

The paper is organized as follows: in Section~\ref{S:basic} we introduce the basic
framework. In Section~\ref{S:ucmap} we give an interpretation of the generating
function of structures of fixed topological genus based on our refined splicing.
The result implies a formula for the $P_g(z)$ polynomials of
\cite{Harer:86,Reidys:top1}. Finally we study the recursion for shapes
in Section~\ref{S:shape}. Here we show that first the original maps restrict
naturally to shapes and secondly that R\'{e}my's bijection \cite{Remy:85} can
be restricted to shape-trees. These two observations allow us to find an explicit
formula for the shape polynomial. Finally, in Section~\ref{S:Gen}, we translate
the results of Section~\ref{S:shape} and derive the linear time, uniform generation
algorithm for shapes of fixed topological genus.

%%%
%%%%%%%%%%%%%%%%%%%%%%%%%%%%%%%%%%%%%%%%%%%%%%%%%%%%%%%%%%%%%%%%%%%%%%%%%%%%%%%%%%%%%%%%%%%%%
%%%
\section{Some basic facts} \label{S:basic}
%%%
%%%%%%%%%%%%%%%%%%%%%%%%%%%%%%%%%%%%%%%%%%%%%%%%%%%%%%%%%%%%%%%%%%%%%%%%%%%%%%%%%%%%%%%%%%%%%
%%%

\subsection{Diagrams}

A diagram is a labeled graph over the vertex set $[n]=\{1, \dots, n\}$ in
which each vertex has degree $\le 3$, represented by drawing its vertices
in a horizontal line. The backbone of a diagram is the sequence of
consecutive integers $(1,\dots,n)$ together with the edges $\{\{i,i+1\}
\mid 1\le i\le n-1\}$. The arcs of a diagram, $(i,j)$, where $i<j$, are
drawn in the upper half-plane. We shall distinguish backbone edges
$\{i,i+1\}$ from arcs $(i,i+1)$, which we refer to as a $1$-arc.
Two arcs $(i,j)$, $(r,s)$, where $i<r$ are crossing if $i<r<j<s$ holds.
Parallel arcs of the form $\{(i,j), (i+1,j-1), \cdots, (i+\ell-1, j-\ell+1)\}$
is called a stack, and $\ell$ is called the length of this stack.
Furthermore, the particular arc, $(1,n)$, is called the rainbow, see
Figure~\ref{F:bc} (A).

\begin{figure}[ht]
\begin{center}
\includegraphics[width=0.9\columnwidth]{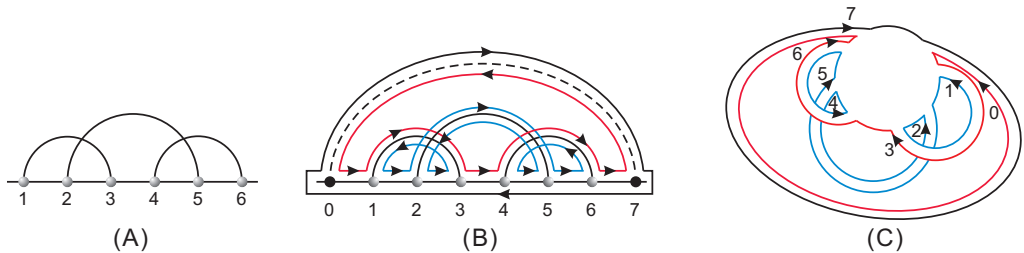}
\end{center}
\caption{\small (A) A diagram.  (B) the fattening of (A) augmented by the
rainbow (0, 7). Here $\sigma=(0,1,2,3,4,5,6,7)$, $\alpha=(0,7)(1,3)(2,5)(4,6)$.
Accordingly $\gamma= \alpha \circ \sigma= (0, 3, 6)(1, 5, 4, 2)(7)$ has two cycles.
(C) Collapsing the backbone into a vertex.
}
\label{F:bc}
\end{figure}

%%A Figure showing a diagram, 1-arc, stack and rainbow

%%%
%%%%%%%%%%%%%%%%%%%%%%%%%%%%%%%%%%%%%%%%%%%%%%%%%%%%%%%%%%%%%%%%%%%%%%%%%%%%%%%%%%%%%%%%%%%%%
%%%
\subsection{Fatgraphs and unicellular maps}
%%%
%%%%%%%%%%%%%%%%%%%%%%%%%%%%%%%%%%%%%%%%%%%%%%%%%%%%%%%%%%%%%%%%%%%%%%%%%%%%%%%%%%%%%%%%%%%%%
%%%
In this section, we discuss the filtration of diagrams by topological genus. In order to
extract topological properties of diagrams those need to be enriched to fatgraphs. The
latter are tantamount to a cell-complex of an by construction orientable, topological
surfaces.
Formally, we make this transition \cite{Reidys:top1} by ``thickening''
the edges of the diagram into (untwisted) bands or ribbons. Furthermore
each vertex is inflated into a disc.
This inflation of edges and vertices means to replace a set of incident edges
by a sequence of half-edges. This constitutes the fatgraph $\mathbb{D}$
\cite{Loebl:08,Penner:10}, see Figure~\ref{F:bc} (B).

%%Figure 3

A fatgraph is thus a graph enriched by a cyclic ordering of the incident
half-edges at each vertex and consists of the following data: a set of
half-edges, $H$, cycles of half-edges as vertices and pairs of half-edges
as edges. Consequently, we have the following definition:

%%%
%%%%%%%%%%%%%%%%%%%%%%%%%%%%%%%%%%%%%%%%%%%%%%%%%%%%%%%%%%%%%%%%%%%%%%%%%%%%%%%
%%%
\begin{definition}
A fatgraph is a triple $(H, \sigma, \alpha)$, where $\sigma$ is
the vertex-permutation and $\alpha$ a fixed-point free involution.
\end{definition}
%%%
%%%%%%%%%%%%%%%%%%%%%%%%%%%%%%%%%%%%%%%%%%%%%%%%%%%%%%%%%%%%%%%%%%%%%%%%%%%%%%%
%%%

In the following we will deal with orientable fatgraphs\footnote{Here ribbons may also
be allowed to twist giving rise to possibly non-orientable surfaces \cite{Massey:69}.}.
Each ribbon has two boundaries. The first one in counterclockwise
order shall be labeled by an arrowhead, see Figure~\ref{F:bc} (B).

A fatgraph $\mathbb{D}$ exhibits a phenomenon, not present in its
underlying graph $D$. Namely, one can follow the (directed) sides of the
ribbons rotating counterclockwise around the vertices. This gives rise to
$\mathbb{D}$-cycles or boundary components, constructed by following these
directed boundaries from disc to disc. Algebraically, this amounts to form
the permutation $\gamma=\alpha \circ \sigma$.

In the following we consider only diagrams with a rainbow. As we shall see,
the rainbow arc provides a canonical first boundary component, which travels
on top of the rainbow arc and the bottom of the backbone of the diagram.

%% Figure 4

%%%% topological genus
A fatgraph, $\mathbb{D}$, can be viewed as a ``drawing'' on a
certain topological surface.
$\mathbb{D}$ is a $2$-dimensional cell-complex over its geometric
realization, i.e.~a surface without boundary, $X_{\mathbb{D}}$, realized
by identifying all pairs of edges \cite{Massey:69}.
Key invariants of the latter, like Euler characteristic \cite{Massey:69}
\begin{eqnarray}\label{E:euler}
\chi(X_\mathbb{D}) & = & v - e + r,\\
\label{E:genus}
g(X_\mathbb{D}) & = & 1-\frac{1}{2}\chi(X_\mathbb{D}),
\end{eqnarray}
where $v,e,r$ denotes the number of discs, ribbons and boundary components
in $\mathbb{D}$ \cite{Massey:69} are defined combinatorially. However,
equivalence of simplicial and singular homology \cite{Hatcher:02} implies that
these combinatorial invariants are in fact invariants of $X_{\mathbb{D}}$ and thus
topological. This means the surface $X_{\mathbb{D}}$ provides a topological
filtration of fatgraphs.

Since adding a rainbow or collapsing the backbone of a diagram, see Figure~\ref{F:bc} (C),
does not change the Euler characteristic, the relation between
genus and number of boundary components is solely determined by
the number of arcs in the upper half-plane:
\begin{equation}\label{E:ee}
2-2g-r = 1-n,
\end{equation}
where $n$ is number of arcs and $r$ the number of boundary
components. The latter can be computed easily and allows us therefore
to obtain the genus of the diagram.

\begin{definition}
A unicellular map $\mathfrak{m}$ of size $n$ is a fatgraph
$\mathfrak{m}(n)=(H,\alpha,\sigma)$ in which the
permutation $\alpha\circ\sigma$ is a cycle of length $2n$.
\end{definition}

While unicellular maps are simply particular fatgraphs, they naturally
arise in the context of diagrams, by two observations. First in the
diagram one may collapse the backbone into a single
vertex. Second the mapping
$$
\pi \colon (H,\sigma, \alpha) \ \mapsto \ (H, \alpha\circ \sigma,\alpha),
$$
is evidently a bijection between fatgraphs having one vertex and
unicellular maps, see Figure~\ref{F:dual}.
The mapping is called the {\it Poincar\'{e} dual} and interchanges
boundary components by vertices, preserving topological genus.
In the following, we use $\pi$ to denote the {Poincar\'{e} dual}.

\begin{figure}[ht]
\begin{center}
\includegraphics[width=0.8\columnwidth]{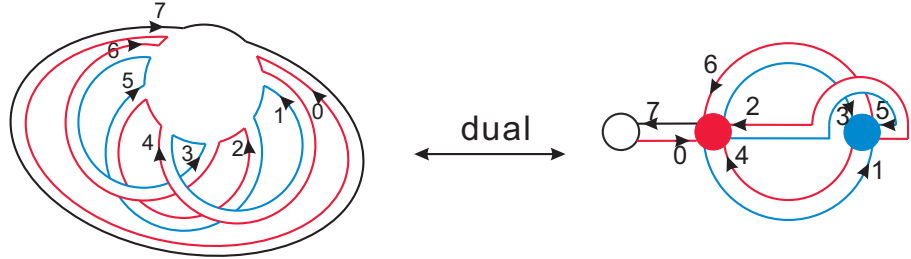}
\end{center}
\caption{\small The Poincar\'{e} dual: we map a fatgraph with $1$
vertex and $3$ boundary components into a fatgraph with $3$ vertexes
and $1$ boundary component.
}
\label{F:dual}
\end{figure}

%%Figure 5

Given a unicellular map the permutation $\sigma$ and $\gamma$ induces two
linear orders of half-edges
$$
r <_{\gamma} \gamma(r) <_{\gamma} \dots <_{\gamma} \gamma^{2n-1}(r), \quad
r <_{\sigma} \sigma(r) <_{\sigma} \dots <_{\sigma} \sigma^{k}(r).
$$
Let $a_1$ and $a_2$ be two distinct half-edges in $\mathfrak{m}$. Then
$a_1<_{\gamma} a_2$ expresses the fact that $a_1$ appears before $a_2$ in
the boundary component $\gamma=\alpha\circ \sigma$.
Suppose two half-edges $a_1$ and $a_2$ belong to the same vertex $v$. Note
that $v$ is effectively a cycle which we assume to originate with the first
half-edge along which one enters $v$ traveling $\gamma$. Then
$a_1<_\sigma a_2$ expresses the fact that $a_1$ appears (counterclockwise)
before $a_2$.

The Poincar\'{e} dual maps the rainbow into a distinguished vertex of degree one
and provides thereby a natural origin for the cycle $\gamma$. We call this
vertex the {\it plant}, see Figure~\ref{F:dual}.
Given a unicellular map we call a half-edge the minimum half-edge of a
vertex $v$ if it is the first half-edge via which $\gamma$ visits $v$.

\subsection{Shapes}

An arc is called a $1$-arc if it is the form $(i,i+1)$. Two arcs are called
parallel if they are of the form of $(i,j)$ and $(i+1,j-1)$. A diagram is called
a {\it preshape} if it contains neither $1$-arcs nor parallel arcs, see
Figure~\ref{F:shape}. A preshape without a rainbow is called pure. Clearly,
there is a projection from preshapes to pure preshapes obtained by removing the
latter. A shape is then obtained from a pure preshape by adding a rainbow.

\begin{figure}[ht]
\begin{center}
\includegraphics[width=0.8\columnwidth]{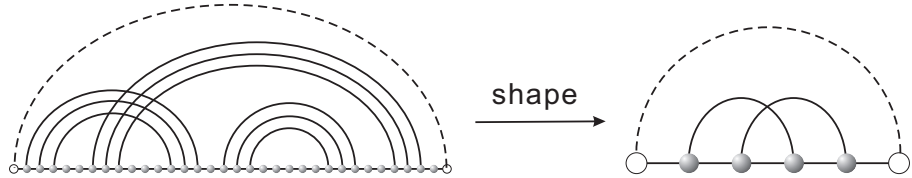}
\end{center}
\caption{\small From a diagram to a shape by removing all $1$-arc and parallel arcs.
The dash arc is a rainbow arc, where a preshape is nested inside.
}
\label{F:shape}
\end{figure}

%%%
%%%%%%%%%%%%%%%%%%%%%%%%%%%%%%%%%%%%%%%%%%%%%%%%%%%%%%%%%%%%%%%%%%%%%%%%%%
%%%
\begin{proposition}
Let $S_g$ be a shape of genus $g$ having $n$ arcs. Let further $\mathfrak{s}_g$
denote its associated unicellular map. Then any vertex in $\mathfrak{s}_g$ has
degree $\ge 3$.
\end{proposition}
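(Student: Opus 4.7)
The plan is to transfer the statement through the Poincar\'e dual into a condition on cycle-lengths of the permutation $\gamma=\alpha\circ\sigma$ attached to the fatgraph obtained from $S_g$ by collapsing the backbone to a single vertex, and then read off which local combinatorial patterns could give rise to a short cycle. Since $\pi$ interchanges cycles of $\gamma$ with vertices of $\mathfrak{s}_g$ and preserves cycle-lengths, degrees of vertices in $\mathfrak{s}_g$ coincide with cycle-lengths of $\gamma$; thus it will suffice to exclude cycles of $\gamma$ of length $1$ or $2$ beyond the plant cycle induced by the rainbow.

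The key preparatory step is to make the cyclic order at the collapsed vertex explicit. Labelling the $2n$ arc-endpoints of $S_g$ in their natural left-to-right order along the backbone as $h_1,\dots,h_{2n}$, I expect $\sigma$ to reduce to the single cycle $(h_1\,h_2\,\cdots\,h_{2n})$, while $\alpha$ pairs $h_i$ with $h_j$ precisely when $(i,j)$ is an arc of $S_g$. Verifying this, which follows by tracing the boundary of the backbone strip in the upper half-plane and noting that the rainbow closes it up, is the only mildly geometric point in the argument; the rest will be purely combinatorial.

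With the cyclic order fixed, short cycles translate into forbidden configurations by direct inspection. A fixed point $\gamma(h_k)=h_k$ forces $\alpha(h_{k+1\bmod 2n})=h_k$, which is either a $1$-arc $(k,k+1)$ of $S_g$ or, when $k=2n$, the rainbow itself, i.e.\ the plant. A $2$-cycle $\{h_a,h_b\}$ (with $a<b$) forces $\alpha(h_{a+1})=h_b$ and $\alpha(h_{b+1\bmod 2n})=h_a$, producing a pair of parallel arcs $(a,b+1),(a+1,b)$; an index wrap-around at $b=2n$ is excluded because the rainbow already pins $\alpha(h_1)=h_{2n}$, while a wrap-around at $a+1=2n$ merely forces one of the parallel arcs to be the rainbow itself. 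Since a shape contains no $1$-arcs and no parallel arcs (including none parallel to the rainbow), no such short cycles of $\gamma$ exist beyond the plant, so every non-plant vertex of $\mathfrak{s}_g$ has degree at least $3$, as claimed.
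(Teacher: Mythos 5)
Your proof is correct and follows essentially the same route as the paper's: both identify vertex degrees of $\mathfrak{s}_g$ with the lengths of boundary components (cycles of $\gamma=\alpha\circ\sigma$) via the Poincar\'e dual and then rule out lengths $1$ and $2$ using the absence of $1$-arcs and parallel arcs together with the presence of the rainbow. Your permutation-level case analysis (fixed points giving $1$-arcs or the plant, $2$-cycles giving parallel arc pairs, with the rainbow excluding the wrap-around configuration) is simply a more explicit rendering of the paper's boundary-component argument, and your explicit carve-out of the degree-one plant vertex is in fact slightly more careful than the paper's own wording.
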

%%%
%%%%%%%%%%%%%%%%%%%%%%%%%%%%%%%%%%%%%%%%%%%%%%%%%%%%%%%%%%%%%%%%%%%%%%%%%%
%%%
\begin{proof}
We proof the proposition by contradiction. Suppose $v$ is a vertex in
$\mathfrak{s}_g$. The boundary component in $S_g$ associated to $v$
travels $d(v)$ arcs. The Poincar\'{e} dual maps a boundary component to a vertex, so
in case of $d(v)=1$, the boundary component travels only one arc and is thus
a $1$-arc. A boundary component consisting of two arcs is obtained by either
parallel arcs or subsequent arcs, where the endpoint of the second arc travels
via the backbone to the start point of the first. The latter case is impossible
since a shape always contains a rainbow which increases the size to three and
the proposition follows, see Figure~\ref{F:dual}.
\end{proof}

%%%
%%%%%%%%%%%%%%%%%%%%%%%%%%%%%%%%%%%%%%%%%%%%%%%%%%%%%%%%%%%%%%%%%%%%%%%%%%
%%%
\subsection{Topological induction}
%%%
%%%%%%%%%%%%%%%%%%%%%%%%%%%%%%%%%%%%%%%%%%%%%%%%%%%%%%%%%%%%%%%%%%%%%%%%%%%
%%%

In this section we present a construction of \cite{Chapuy:11}, which plays a
key role for our main result. It consists of two processes: a slicing-map
$\Xi$ and a gluing-map $\Lambda$, which, when restricted to the proper
classes, are inverse to each other, see Figure~\ref{F:glue_slice}.

\begin{figure}[ht]
\begin{center}
\includegraphics[width=0.8\columnwidth]{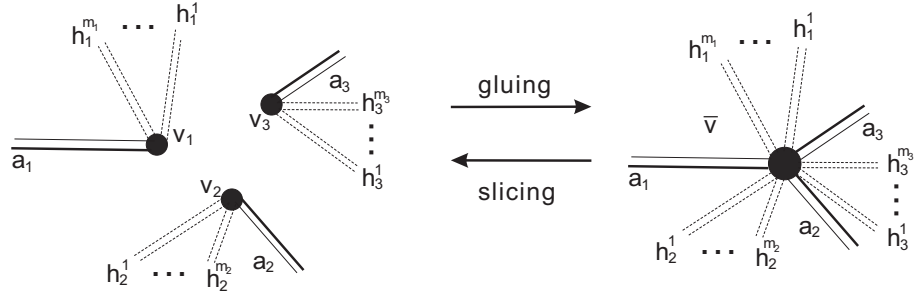}
\end{center}
\caption{\small Illustration of gluing and slicing in a unicellular map.
}
\label{F:glue_slice}
\end{figure}

The slicing process splits a vertex into $(2g+1)$ vertices and thereby
reduces the genus of the map by $g$. Gluing effectively inverts
slicing, namely: gluing any $(2g+1)$ vertices in a unicellular map increases
the genus of the map by $g$. Slicing and gluing preserve unicellularity.

%%%
%%%%%%%%%%%%%%%%%%%%%%%%%%%%%%%%%%%%%%%%%%%%%%%%%%%%%%%%%%%%%%%%%%%%%%%%%%%%%%%%%%%%%%%%%%%%%
%%%
\begin{definition}
A half-edge $h$ is an {\it up-step} if $h<_{\gamma} \sigma(h)$, and a
{\it down-step} if $\sigma(h) \le_{\gamma} h$. $h$ is called a {\it trisection}
if $h$ is a down-step and $\sigma(h)$ is not the minimum half-edge of its
respective vertex.
\end{definition}
%%%
%%%%%%%%%%%%%%%%%%%%%%%%%%%%%%%%%%%%%%%%%%%%%%%%%%%%%%%%%%%%%%%%%%%%%%%%%%%%%%%%%%%%%%%%%%%%%
%%%

The number of trisections in a unicellular map is an invariant of a unicellular map with
fixed genus $g$. Moreover, then number is given by the following lemma:
%%%% trisection lemma

%%%
%%%%%%%%%%%%%%%%%%%%%%%%%%%%%%%%%%%%%%%%%%%%%%%%%%%%%%%%%%%%%%%%%%%%%%%%%%%%%%%%%%%%%%%%%%%%%
%%%
\begin{lemma} \label{L:trisection}
\cite{Chapuy:11} Let $\mathfrak{m}_g$ be a unicellular map of
genus $g$. Then $\mathfrak{m}_g$ has exactly $2g$ trisections.
\end{lemma}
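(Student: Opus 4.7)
The plan is to compute $T := \sum_v t(v)$ by reducing the count of trisections to a count of down-steps at each vertex, and then performing a global count of up-steps along the boundary cycle $\gamma$.

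At a vertex $v$ of degree $d_v$, list its half-edges in the $\sigma$-cycle starting from the $\gamma$-minimum as $h_0, h_1, \dots, h_{d_v-1}$, with $\sigma(h_i) = h_{(i+1)\bmod d_v}$. For $0 \le i \le d_v - 2$, $\sigma(h_i) = h_{i+1}$ is not the minimum of $v$, so $h_i$ is a trisection iff it is a down-step. The half-edge $h_{d_v-1}$ has $\sigma(h_{d_v-1}) = h_0$, the minimum, hence it is automatically a down-step (its image attains the smallest $\gamma$-position at $v$) but is excluded from being a trisection; the degenerate case $d_v = 1$ is consistent via the fixed point $\sigma(h_0) = h_0$. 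Summing over $v$ gives
\[
t(v) = (\text{\#\,down-steps at }v) - 1, \qquad T = D - v,
\]
where $D$ is the total number of down-steps and $v$ the number of vertices.

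To compute $D$ I would count up-steps instead. Since $\alpha^2 = \mathrm{id}$, the identity $\gamma = \alpha\sigma$ inverts to $\sigma = \alpha\gamma$. Enumerating the half-edges in $\gamma$-order as $h^{(0)}, \dots, h^{(2n-1)}$ and writing $q(j)$ for the $\gamma$-position of $\alpha(h^{(j)})$, this yields $\sigma(h^{(i)}) = \alpha(h^{(i+1)})$ (indices mod $2n$), so $h^{(i)}$ is an up-step iff $q(i+1) > i$. Setting $j = i+1$ and using that $\alpha$ is fixed-point-free (so $q(j) \ne j$), this is equivalent to $q(j) > j$. For each edge with $\gamma$-positions $p_1 < p_2$, only $j = p_1$ satisfies $q(j) > j$; therefore $\{j : q(j) > j\}$ is exactly the set of smaller positions of the edges and has cardinality $n$. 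Since $j = 0$ always lies in this set, restricting to $j \in \{1, \dots, 2n-1\}$ leaves $n - 1$ up-steps, hence $D = 2n - (n-1) = n + 1$.

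Finally, since $\mathfrak{m}_g$ is unicellular with $n$ edges, Euler's formula $v - e + f = 2 - 2g$ with $f = 1$ and $e = n$ gives $v = n + 1 - 2g$, so
\[
T = D - v = (n+1) - (n+1-2g) = 2g,
\]
as required. The main obstacle is the global step: once one recognizes that the substitution $\sigma = \alpha\gamma$ converts the condition ``$\sigma$ advances forward in the $\gamma$-cycle'' into a pure statement about where the edge-partners lie along $\gamma$, the enumeration is immediate; isolating this substitution and the resulting $q(j) > j$ characterization is the crucial observation.
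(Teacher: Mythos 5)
Your proof is correct: the reduction ``trisections at $v$ = down-steps at $v$ minus one'', the count of up-steps via $\sigma=\alpha\gamma$ (giving $n-1$ up-steps, hence $n+1$ down-steps), and Euler's formula $v=n+1-2g$ for a one-face map combine exactly as you claim to give $2g$ trisections. Note that the paper itself offers no proof of this lemma -- it is quoted from Chapuy -- and your argument is essentially Chapuy's original counting proof, so there is nothing in the paper to diverge from; the only cosmetic point is the wrap-around index $i=2n-1$, which you already handle correctly by observing that $h^{(2n-1)}$ is always a down-step and that the position $j=0$ must be excluded from the set $\{j: q(j)>j\}$.
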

%%%
%%%%%%%%%%%%%%%%%%%%%%%%%%%%%%%%%%%%%%%%%%%%%%%%%%%%%%%%%%%%%%%%%%%%%%%%%%%%%%%%%%%%%%%%%%%%%
%%%

Slicing reduces the number of trisections in a unicellular map of genus $g$. First we pick up a
trisection $\tau$ and assume it is contained in a vertex $v$.
Let $a_1$ denote the minimum half-edge in $\overline{v}$, and $a_3$ denote the half-edge located
anticlockwise from $\tau$. We consider for the half-edge between $a_1$ and $a_3$,
$a_2$, that is the minimum half-edge satisfying $a_2>_\gamma a_3$. We can always find such a
half-edge $a_2$ since $\tau$ is a trisection and $\tau>_\gamma a_3$, by definition.

Let
$$
\overline{v}=(a_1, h_2^1,\ldots, h_2^{m_2}, a_2, h_3^1,\ldots,h_3^{m_3},a_3,h_1^{1},\ldots,h_1^{m_1}),
$$
and
$$
\overline{\gamma}=(\ell_1^1, \ldots, \ell_{k_1}^1, a_1, \ell_1^3, \ldots, \ell_{k_3}^3, a_3,
\ell_1^2, \ldots, \ell_{k_2}^2, a_2, \ell_1^4, \ldots, \ell_{k_4}^4).
$$
We slice $\overline{v}$ into three vertices $v_i$, $i=1,2,3$, where
$v_i=(a_i, h_i^1,\ldots, h_i^{m_i})$.
The new boundary is given by
$$
\gamma=(\ell_1^1, \ldots, \ell_{k_1}^1, a_1, \ell_1^2, \ldots, \ell_{k_2}^2, a_2,
\ell_1^3, \ldots, \ell_{k_3}^3, a_3, \ell_1^4, \ldots, \ell_{k_4}^4).
$$
By construction $a_1$ and $a_2$ are the minimum half-edges in $v_1$ and $v_2$ respectively.
However, $a_3$ is not necessarily minimal in $v_3$. If $a_3$ is the minimum, we have $a_1$,
$a_2$ and $a_3$ as the minimum half-edges in $v_1$, $v_2$ and $v_3$, respectively. Otherwise,
$\tau$ remains a trisection in $v_3$.

Consequently, we have two mappings, depending on whether or not $a_3$ is minimal:
$$
\rho_1 \colon (\overline{\mathfrak{m}}, \tau) \rightarrow (\mathfrak{m}, v_1, v_2, v_3),
\quad
\rho_2 \colon (\overline{\mathfrak{m}}, \tau) \rightarrow (\mathfrak{m}, v_1, v_2, \tau),
$$
where $\mathfrak{m}$, $\overline{\mathfrak{m}}$ are unicellular maps of genus $g$ and $g+1$,
respectively.

In the first case, $\tau$ is no longer a trisection after slicing and called a
{\it Type I}. In the second case, $\tau$ remains a trisection, a trisection of {\it Type II}.
%%%
%%%%%%%%%%%%%%%%%%%%%%%%%%%%%%%%%%%%%%%%%%%%%%%%%%%%%%%%%%%%%%%%%%%%%%%%%%
%%%
\begin{proposition}\cite{Chapuy:11}
The mappings $\rho_1$ and $\rho_2$ are bijections.
\end{proposition}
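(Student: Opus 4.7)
The plan is to exhibit explicit inverse maps, namely gluing constructions $\Lambda_1$ and $\Lambda_2$, and verify that $\Lambda_i\circ \rho_i$ and $\rho_i\circ \Lambda_i$ are the respective identities. Along the way I would check that each construction preserves unicellularity and changes the genus by exactly one, which is the main bookkeeping.

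First I would verify the basic invariants. Since a unicellular map has exactly one face, (\ref{E:ee}) specializes to $v=e+1-2g$. Slicing replaces a single vertex by three while leaving the number of edges unchanged, so as soon as one shows that the explicit permutation
$$
\gamma=(\ell_1^1,\dots,\ell_{k_1}^1,a_1,\ell_1^2,\dots,\ell_{k_2}^2,a_2,\ell_1^3,\dots,\ell_{k_3}^3,a_3,\ell_1^4,\dots,\ell_{k_4}^4)
$$
is a single cycle of length $2n$, unicellularity is preserved and the genus drops by one; conversely gluing, defined as the inverse rearrangement of half-edges on the $\gamma$-side, raises the genus by one. Therefore the domains and codomains of $\rho_1,\rho_2$ match those advertised in the proposition.

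Next I would define the gluing map $\Lambda_1$ on triples $(\mathfrak{m},v_1,v_2,v_3)$ of marked vertices with minimum half-edges $a_1,a_2,a_3$, in the unique $\gamma$-order $a_1<_\gamma a_2<_\gamma a_3$. Concatenate the three vertex-cycles into
$$
\overline{v}=(a_1,h_2^1,\dots,h_2^{m_2},a_2,h_3^1,\dots,h_3^{m_3},a_3,h_1^1,\dots,h_1^{m_1}),
$$
producing a unicellular map $\overline{\mathfrak{m}}$ of genus $g+1$. I would then check by direct calculation from the definitions that $a_3$ is a trisection of $\overline{\mathfrak{m}}$ of Type I: it is a down-step because $\sigma(a_3)=h_1^1$ appears in the $\ell^1$ block, i.e.~before $a_3$ in $\gamma$; and $\sigma(a_3)=h_1^1$ is not the minimum of its future vertex (which is $a_1$). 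Comparing with the slicing formulas in the excerpt, one sees $\rho_1\circ \Lambda_1=\mathrm{id}$, and the reverse composition is essentially identical because $\rho_1$ is determined by the same ordering data. For $\Lambda_2$ one performs the analogous gluing starting from $(\mathfrak{m},v_1,v_2,\tau)$, where now $\tau$ lives in a third vertex $v_3$ but is \emph{not} its minimum; gluing at the minima $a_1,a_2$ of $v_1,v_2$ and at $\tau$ recreates a vertex of $\overline{\mathfrak{m}}$ in which $\tau$ persists as a trisection of Type II.

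The main obstacle is the careful matching of the two linear orders $<_\gamma$ and $<_\sigma$ under the gluing, in particular verifying that the choice of $a_2$ as the first half-edge in $\gamma$-order after $a_3$ is exactly what is needed to make $\Lambda_i$ well-defined and inverse to $\rho_i$. One must show that, after gluing, the $\gamma$-order before and after the three marked half-edges matches the four blocks $\ell^1,\ell^2,\ell^3,\ell^4$ of the slicing formula, and that the Type I vs Type II dichotomy (whether $a_3$ is or is not the minimum of $v_3$) is precisely what distinguishes the two inverse constructions. Once this is checked, $\rho_1$ and $\rho_2$ are bijections between their respective sets of unicellular maps with marked data, completing the proof.
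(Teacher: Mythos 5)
Your overall strategy---write down explicit gluing maps $\Lambda_1,\Lambda_2$ and verify they invert $\rho_1,\rho_2$, with the Euler-characteristic bookkeeping for unicellularity and genus---is exactly the route taken in the cited source; the paper itself offers no proof but fixes the conventions through its slicing formulas, and your glued vertex $\overline v=(a_1,h_2^1,\dots,h_2^{m_2},a_2,h_3^1,\dots,h_3^{m_3},a_3,h_1^1,\dots,h_1^{m_1})$ matches them. The genuine gap is your identification of the trisection created by $\Lambda_1$. You claim it is $a_3$, arguing that $\sigma(a_3)=h_1^1$ ``appears in the $\ell^1$ block''. That cannot happen: $\ell^1$ is the part of the boundary preceding $a_1$, while $h_1^1$ belongs to $v_1$, whose minimum half-edge is $a_1$, so $h_1^1>_{\gamma}a_1$; depending on whether $h_1^1$ lies in $\ell^2$, $\ell^3$ or $\ell^4$, the half-edge $a_3$ may well be an up-step of the glued map and then is not a trisection at all. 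Under the paper's conventions the trisection produced by gluing is the half-edge $\tau$ with $\overline\sigma(\tau)=a_3$, i.e.\ the predecessor of $a_3$ in $\overline v$ (the last half-edge $h_3^{m_3}$ of the tail of $v_3$, or $a_2$ when $v_3$ has degree one). It is a down-step because every half-edge of $v_3$ other than $a_3$ lies in $\ell^4$, which in $\overline\gamma=(\ell^1,a_1,\ell^3,a_3,\ell^2,a_2,\ell^4)$ comes after $a_3$; it is of Type I because $a_3$ is by construction the minimum of $v_3$, and $\overline\sigma(\tau)=a_3\neq a_1$ is not minimal in $\overline v$.

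This is not cosmetic: $\rho_1$ is defined by reading off $a_3=\sigma(\tau)$ from the marked trisection, so if $\Lambda_1$ outputs the mark $a_3$ instead of $\tau$, then $\rho_1\circ\Lambda_1$ slices at the wrong place (it would use $\sigma(a_3)$, a half-edge of $v_1$, as the splitting point) and does not return $(\mathfrak{m},v_1,v_2,v_3)$; the inverse property fails as written. The same imprecision affects $\Lambda_2$: there the third attachment point must be $\sigma(\tau)$, which is \emph{not} the minimum of its vertex, rather than ``gluing at $\tau$'' as you phrase it. Finally, the step you yourself flag as the main obstacle---that after gluing, the rule ``$a_2$ is the minimum half-edge between $a_1$ and $a_3$ with $a_2>_{\overline\gamma}a_3$'' recovers the original $a_2$, and that the Type I/Type II dichotomy corresponds exactly to whether the mark in $v_3$ is its minimum---is precisely the verification that makes the proposition true, and it is left unexecuted. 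With the corrected choice of marked trisection that check does go through along the lines you sketch, but as it stands your argument does not establish bijectivity.
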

%%%
%%%%%%%%%%%%%%%%%%%%%%%%%%%%%%%%%%%%%%%%%%%%%%%%%%%%%%%%%%%%%%%%%%%%%%%%%%
%%%
Gluing can be described as follows:
given a unicellular map of $\mathfrak{m}_{g-k}$, together with a sequence of
vertices $V=\{v_1, \ldots v_{2k+1}\}$, where $v_i<_{\gamma} v_{i+1}$, $\forall 1\le i <2k+1$,
then: \\
{\bf I.} we glue the last three vertices $v_{2k-1}$, $v_{2k}$ and $v_{2k+1}$ via $\rho_1^{-1}$, thereby
obtaining the unicellular map $\mathfrak{m}_{g-k+1}$ together with a type I trisection $\tau^I$. \\
{\bf II.} we apply $\rho_2^{-1}(\mathfrak{m}_{g-k+i}, v_{2k-2i-1}, v_{2k-2i}, \tau^I)$ $k-1$ times for
$i=1$ to $i=k-1$. This produces the unicellular map $\mathfrak{m}_g(n)$, together with a
trisection $\tau^{II}$. The process defines a mapping
$$
\Lambda(\mathfrak{m}_{g-k}, v_1, \ldots, v_{2k+1})=(\mathfrak{m}_g, \tau).
$$
The order of the vertices in $V$ is induced by the boundary component, $\gamma$. Thus $V$ can be
considered as a set of vertices in $\mathfrak{m}_{g-k}$, ordered by $<{\gamma}$. $\Lambda$ merges
vertices from right to left by first applying $\Phi$ once then applying $\Psi$ until all vertices
are glued together.

$\Lambda$ is reversed as follows: given a unicellular map $\mathfrak{m}_g$ of genus $g$
and $i=0$: \\
{\bf 1.} if $\tau$ is type II trisection in $\mathfrak{m}_{g-i}$, then let $(\mathfrak{m}_{g-i-1},
v_{2i+1},v_{2i+2}, \tau)=\rho_2(\mathfrak{m}_{g-i},\tau)$. We increase $i$ to $i+1$ and repeat step
{\bf 1}. \\
{\bf 2.} if $\tau$ has type I, let $(\mathfrak{m}_{g-i}, v_{2i+1},v_{2i+2}, v_{2i+3})=\rho_1^{-1}(
         \mathfrak{m}_{g-i-1},\tau)$. \\
Then we return
$$
\Xi(\mathfrak{m}_g ,\tau)=(\mathfrak{m}_{g-i}, V_{\tau}).
$$
By construction, $\Lambda$ and $\Xi$ are inverse to each other.

The bijections $\Lambda$ and $\Xi$ immediately induce a connection between
unicellular maps having higher genus with those of lower genus.

%%%
%%%%%%%%%%%%%%%%%%%%%%%%%%%%%%%%%%%%%%%%%%%%%%%%%%%%%%%%%%%%%%%%%%%%%%%%%%%%%%%%%%%%%%%%%%%%%
%%%
\begin{theorem} \cite{Chapuy:11} \label{T:bij}
Let $U_g^t$ denote the set of tuples $(\mathfrak{m}_g, v_1, \ldots, v_{t})$,
where $v_1, \ldots, v_{t}$ is a sequence of vertices in $\mathfrak{m}_g$.
Furthermore, let $D_g$ denote the set of tuples $(\mathfrak{m}_g, \tau)$,
where $\tau$ is a trisection of $\mathfrak{m}_g$.
Then
$$
\Lambda\colon {\dot\bigcup}_{k=0}^{g-1} U_k^{2g-2k+1} \rightarrow D_g,
\quad
\Xi\colon D_g \rightarrow {\dot\bigcup}_{k=0}^{g-1} U_k^{2g-2k+1},
$$
are bijections and $\Lambda\circ \Xi=\text{\rm id}$ and $\Xi\circ \Lambda=\text{\rm id}$.
\end{theorem}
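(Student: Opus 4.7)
The plan is to verify that $\Lambda$ and $\Xi$ are well-defined, mutually inverse maps between the stated sets, by combining the bijection property of the single-step maps $\rho_1,\rho_2$ (already granted) with careful bookkeeping of genus and vertex counts.

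First I would check that $\Xi$ is well-defined and produces a tuple in the claimed codomain. Starting from $(\mathfrak{m}_g,\tau)\in D_g$, the iteration applies $\rho_2$ as long as the current $\tau$ is of Type II, producing $2$ new vertices and lowering the genus by $1$ at each step. By Lemma~\ref{L:trisection}, genus $0$ maps carry no trisections, so the process must terminate at some genus $k\ge 0$ with $\tau$ of Type I; at that point $\rho_1$ is applied exactly once, producing $3$ new vertices. The number of vertices produced is therefore $2(g-k-1)+3 = 2g-2k+1$, placing $\Xi(\mathfrak{m}_g,\tau)$ in $U_k^{2g-2k+1}$ for some $0\le k\le g-1$. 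The inequality $k\le g-1$ follows since the process must invoke $\rho_1$ at least once. Conversely for $\Lambda$, starting from $(\mathfrak{m}_k,v_1,\dots,v_{2g-2k+1})$ I would check that one application of $\rho_1^{-1}$ to $(v_{2k-1},v_{2k},v_{2k+1})$ produces a Type I trisection in $\mathfrak{m}_{k+1}$, and each subsequent $\rho_2^{-1}$ step consumes exactly $2$ vertices while preserving a trisection, so after $g-k-1$ further steps we are in $\mathfrak{m}_g$ with a (Type II) trisection, i.e.\ in $D_g$, and the vertex count $3+2(g-k-1)=2g-2k+1$ matches.

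Next I would verify $\Xi\circ\Lambda=\mathrm{id}$ by induction on the number of steps. The base case is a single application of $\rho_1^{-1}$ followed by $\rho_1$, which is the identity because $\rho_1$ is a bijection. For the inductive step, I would use that by construction the trisection created by $\rho_1^{-1}$ is of Type I, and that each $\rho_2^{-1}$ step produces a Type II trisection at the top, so the $\Xi$ process will see exactly one Type II step for each $\rho_2^{-1}$ applied, and exactly one Type I step for the final $\rho_1^{-1}$, in the correct order. Since $\rho_1,\rho_2$ are mutually inverse to $\rho_1^{-1},\rho_2^{-1}$ on their respective Type I / Type II restrictions, the composition unwinds term by term to the identity. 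The proof of $\Lambda\circ\Xi=\mathrm{id}$ is symmetric.

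The main obstacle is the bookkeeping of the Type I / Type II distinction across iterations, i.e.\ showing that the iterative slicing in $\Xi$ and the iterative gluing in $\Lambda$ follow exactly reciprocal schedules (one Type I step at the extreme and Type II steps elsewhere). Once this schedule is pinned down, everything reduces to the single-step bijectivity of $\rho_1$ and $\rho_2$, which is already available. The vertex-count identity $2g-2k+1$ is then a direct consequence, giving the claimed disjoint-union decomposition of the domain of $\Lambda$.
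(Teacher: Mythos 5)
Your proposal is correct and takes essentially the same route as the paper: the paper defines $\Lambda$ and $\Xi$ as iterations of the single-step slicings/gluings $\rho_1,\rho_2$ (whose bijectivity is quoted from Chapuy) and asserts they are mutually inverse by construction, which is precisely the Type I/Type II scheduling, termination via Lemma~\ref{L:trisection}, and genus/vertex count $2(g-k-1)+3=2g-2k+1$ that you spell out. The only blemishes are cosmetic: an indexing slip (the last three glued vertices are $v_{2g-2k-1},v_{2g-2k},v_{2g-2k+1}$ in your notation), and in the boundary case $k=g-1$ the trisection delivered by $\Lambda$ is of Type I rather than Type II, which does not affect membership in $D_g$ or the argument.
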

%%%
%%%%%%%%%%%%%%%%%%%%%%%%%%%%%%%%%%%%%%%%%%%%%%%%%%%%%%%%%%%%%%%%%%%%%%%%%%%%%%%%%%%%%%%%%%%%%
%%%

The theorem has the following enumerative corollary: let $\epsilon_{g}(n)$ denote the number of
unicellular map of genus $g$ having $n$ edges. Then
%%%
%%%%%%%%%%%%%%%%%%%%%%%%%%%%%%%%%%%%%%%%%%%%%%%%%%%%%%%%%%%%%%%%%%%%%%%%%%%%%%%%%%%%%%%%%%%%%
%%%
\begin{corollary}
\begin{equation} \label{E:induction1}
2g\cdot \epsilon_{g}(n) = {n+1-2(g-1) \choose 3} \epsilon_{g-1}(n)
+ \cdots + {n+1 \choose 2g+1} \epsilon_{0}(n).
\end{equation}
\end{corollary}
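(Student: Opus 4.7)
The plan is to read off the corollary directly from Theorem~\ref{T:bij} by equating cardinalities on the two sides of Chapuy's bijection, combined with Lemma~\ref{L:trisection} and a Euler-characteristic count of vertices.

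First I would compute the right-hand side of the bijection. By Lemma~\ref{L:trisection} every unicellular map of genus $g$ has exactly $2g$ trisections, so
\[
|D_g(n)| \;=\; 2g\cdot \epsilon_g(n).
\]
This is the term $2g\cdot \epsilon_g(n)$ appearing on the left in the claim.

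Next I would count $|U_k^{2g-2k+1}|$ for each $0\le k\le g-1$. A unicellular map $\mathfrak{m}_k$ with $n$ edges has one boundary component, $n$ edges, and $v$ vertices, so Euler's formula \eqref{E:euler}--\eqref{E:genus} yields
\[
v - n + 1 \;=\; 2-2k, \qquad \text{hence} \qquad v \;=\; n+1-2k.
\]
A tuple $(\mathfrak{m}_k, v_1,\ldots,v_{2g-2k+1})$ of $U_k^{2g-2k+1}$ consists of a unicellular map together with a sequence of $2g-2k+1$ distinct vertices ordered by $<_\gamma$; since the cyclic order $\gamma$ is intrinsic, specifying such a sequence is the same as choosing a $(2g-2k+1)$-subset of the vertex set. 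Therefore
\[
|U_k^{2g-2k+1}| \;=\; \binom{n+1-2k}{\,2g-2k+1\,}\,\epsilon_k(n).
\]

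Finally, since $\Lambda$ is a bijection between $\dot\bigcup_{k=0}^{g-1} U_k^{2g-2k+1}$ and $D_g$, summing the above identities over $k=0,\ldots,g-1$ yields
\[
2g\cdot \epsilon_g(n) \;=\; \sum_{k=0}^{g-1} \binom{n+1-2k}{\,2g-2k+1\,}\,\epsilon_k(n),
\]
which is precisely \eqref{E:induction1} written in reverse order (the $k=g-1$ term produces $\binom{n+1-2(g-1)}{3}\epsilon_{g-1}(n)$ and the $k=0$ term produces $\binom{n+1}{2g+1}\epsilon_0(n)$). The only genuine subtlety — and the point I would be most careful about — is the vertex-count and the observation that ordering vertices by $<_\gamma$ is not an extra degree of freedom; everything else is a direct substitution into Theorem~\ref{T:bij}.
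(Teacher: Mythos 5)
Your proof is correct and follows exactly the argument the paper sketches after Theorem~\ref{T:bij}: the factor $2g$ comes from Lemma~\ref{L:trisection} counting trisections, and each binomial coefficient counts the choices of $2(g-k)+1$ vertices among the $n+1-2k$ vertices of a genus-$k$ unicellular map (via the Euler-characteristic count), equated through the bijection $\Lambda$. Your care about the $<_\gamma$-ordering being no extra degree of freedom is exactly the right check, and nothing further is needed.
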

%%%
%%%%%%%%%%%%%%%%%%%%%%%%%%%%%%%%%%%%%%%%%%%%%%%%%%%%%%%%%%%%%%%%%%%%%%%%%%%%%%%%%%%%%%%%%%%%%
%%%
Here the $2g$-factor on left hand side counts the number of trisection in $\mathfrak{m}_g$ and
the binomial coefficients on the right hand side count the number of distinct
selections of subsets of $(2k+1)$ vertices from a unicellular map $\mathfrak{m}_{g-k}$.

Iterating $\Xi$, we obtain
\begin{equation} \label{E:induction2}
\epsilon_{g}(n)  = \sum_{0=g_0<g_1<\cdots<g_r=g} \prod_{i=1}^r
\frac{1}{2g_i}{n+1-2g_{i-1} \choose 2(g_i-g_{i-1})+1} \cdot \epsilon_{0}(n),
\end{equation}
where $\epsilon_0(n)$ is the number of planar trees having $n$ edges, i.e.~the Catalan
number $\frac{1}{n+1}{2n \choose n}$.

%%%% equation

%%%
%%%%%%%%%%%%%%%%%%%%%%%%%%%%%%%%%%%%%%%%%%%%%%%%%%%%%%%%%%%%%%%%%%%%%%%%%%%%%%%%%%%%%%%%%%%%%%
%%%
\section{Unicellular maps of genus $g$}\label{S:ucmap}
%%%%
%%%%%%%%%%%%%%%%%%%%%%%%%%%%%%%%%%%%%%%%%%%%%%%%%%%%%%%%%%%%%%%%%%%%%%%%%%%%%%%%%%%%%%%%%%%%%%
%%%

In Section~\ref{S:basic}, vertices are labeled with respect to only one iteration. After
applying $\Lambda$ there is a normalization via the factor $2g$ after which a new
labeling is being employed. In this Section we consider a pair consisting of a tree with
fixed labeled vertices and a unicellular map of genus $g$, also with a fixed set of
labelled vertices. We then study the set of glue paths from this tree recruiting
exclusively its labelled vertices, which produce the labelled unicellular map.

We begin by considering trees having $n$ edges and $k$ labeled vertices,
$\epsilon_{0}^{(k)}(n)$.
Clearly, the number of these trees is given by the Catalan number ${\rm Cat}(n)=
\frac{1}{n+1}{2n\choose n}$, i.e.~
$$
\epsilon_{0}^{(k)}(n) = {n+1 \choose k} \epsilon_{0}(n) = {n+1 \choose k}  {\rm Cat}(n).
$$

Next we study the case where $g>0$. Consider a unicellular map $\mathfrak{m}_{g,n}^{(k)}$
with $k$ labeled vertices. Applying the slicing bijection $\Xi$ once
we produce $2t+1$ labeled vertices and the genus decreases by $t$.
Therefore, we obtain a new unicellular map $\mathfrak{m}_{g-t,n}^{(k')}$ where $k'=k+2t+1$,
if in the former we slice an unlabeled vertex and $k'=k+2t$, if we slice a
labeled vertex. Then we have the following recursion
\begin{equation} \label{E:eg_rec}
2g\cdot \epsilon_{g}^{(k)}(n)= \sum_{t=1}^{g} {k+2t+1 \choose 2t+1} \epsilon_{g-t}^{(k+2t+1)}(n) +
\sum_{t=1}^{g} {k+2t \choose 2t+1} \epsilon_{g-t}^{(k+2t)}(n).
\end{equation}

Suppose we are given a tree $\mathfrak{m}_{0,n}^{(k_0)}$ having $k_0$ labeled vertices
and a unicellular map $\mathfrak{m}_{g,n}^{(k)}$ having $k$ labeled vertices. In order to
construct glue paths from $\mathfrak{m}_{0,n}^{(k_0)}$ to $\mathfrak{m}_{g,n}^{(k)}$ we proceed
by induction on $g$. The induction basis is clear and by induction hypothesis we have
obtained a labeled unicellular map $\mathfrak{m}_{g_1,n}^{(k_1)}$.
The map $\mathfrak{m}_{g_1, n}^{(k_1)}$ can produce two different labeled, unicellular maps,
namely $\mathfrak{m}_{g_2}^{(k_1-2(g_2-g_1)+1)}$ or $\mathfrak{m}_{g_2}^{(k_1-2(g_2-g_1))}$,
depending whether we label the new vertex or not.
By applying eq.~\ref{E:eg_rec}, we can compute the number of $\mathfrak{m}_{g,n}^{(k)}$
by $\mathfrak{m}_{0,n}^{(k_0)}$ inductively.

Let us first apply the new recursion in order to derive expressions for the generating
function of RNA structures of fixed topological genus,
$C_g^{(k)}(z)=\sum_{i=0}^{\infty} \epsilon_{g}^{(k)}(i) z^i$.

First we consider the case when $g=0$ and $k=0$, i.e., a tree without any labeled
vertex. Clearly, $C_0^{(0)}(z)$ satisfies
$$
C_0^{(0)}(z) = z(C_0^{(0)}(z))^2 + 1,
$$
whence $C_0^{(0)}(z) = \frac{1-\sqrt{1-4z}}{2z}$. For $k>0$, we accordingly have:

%%%
%%%%%%%%%%%%%%%%%%%%%%%%%%%%%%%%%%%%%%%%%%%%%%%%%%%%%%%%%%%%%%%%%%%%%%%%%%%%%%%%%%%%%%%%%
%%%
\begin{lemma} \label{L:label0}
We have
\begin{equation} \label{E:label0}
C_0^{(k)}(z) = {\rm Cat}(k-1)z^{k-1}(1-4z)^{-\frac{2k-1}{2}}, \forall k> 0,
\end{equation}
where ${\rm Cat}(n)$ denotes the Catalan number ${\rm Cat}(n)=\frac{1}{n}{2n \choose n}$.
\end{lemma}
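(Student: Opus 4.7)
The plan is to compute $C_0^{(k)}(z)$ directly from the already-established identity $\epsilon_0^{(k)}(n) = \binom{n+1}{k}\mathrm{Cat}(n)$, by realising the binomial $\binom{n+1}{k}$ as a $k$-fold derivative. First I would observe the elementary relation
$$
\frac{1}{k!}\,\frac{d^k}{dz^k}\, z^{n+1} \;=\; \binom{n+1}{k}\, z^{n-k+1},
$$
which, after multiplying by $\mathrm{Cat}(n)$ and summing over $n\ge 0$, yields
$$
C_0^{(k)}(z) \;=\; \frac{z^{k-1}}{k!}\,\frac{d^k}{dz^k}\bigl(z\,C_0^{(0)}(z)\bigr) \;=\; \frac{z^{k-1}}{k!}\,\frac{d^k}{dz^k}\!\left(\frac{1-\sqrt{1-4z}}{2}\right),
$$
using the explicit form of $C_0^{(0)}(z)$ derived immediately before the lemma. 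This reduces the entire problem to differentiating a known algebraic function.

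Next I would differentiate $k$ times using the general power rule,
$$
\frac{d^k}{dz^k}(1-4z)^{1/2} \;=\; \left(\prod_{j=0}^{k-1}\bigl(\tfrac{1}{2}-j\bigr)\right)(-4)^k(1-4z)^{\frac{1}{2}-k},
$$
and simplify the falling factorial $\prod_{j=0}^{k-1}(1/2-j)$ to $(-1)^{k-1}(2k-3)!!/2^k$ under the convention $(-1)!!=1$. Collecting signs and powers of $2$ then gives
$$
\frac{d^k}{dz^k}\!\left(\frac{1-\sqrt{1-4z}}{2}\right) \;=\; 2^{k-1}(2k-3)!!\,(1-4z)^{-(2k-1)/2}.
$$

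To finish, I would invoke the elementary identity $2^{k-1}(2k-3)!! = (2k-2)!/(k-1)! = k!\,\mathrm{Cat}(k-1)$, which follows at once from splitting $(2k-2)!$ into even and odd factors. Substituting this back produces
$$
C_0^{(k)}(z) \;=\; \frac{z^{k-1}}{k!}\cdot k!\,\mathrm{Cat}(k-1)\,(1-4z)^{-(2k-1)/2} \;=\; \mathrm{Cat}(k-1)\,z^{k-1}(1-4z)^{-(2k-1)/2},
$$
as claimed. The main (though minor) obstacle is the bookkeeping of signs and powers of two in the derivative step; apart from this the proof reduces to a direct computation and no induction on $k$ or combinatorial reasoning on trees is needed beyond the enumeration $\epsilon_0^{(k)}(n) = \binom{n+1}{k}\mathrm{Cat}(n)$ already recorded just before the statement.
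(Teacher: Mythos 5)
Your proof is correct, and it takes a genuinely different route from the paper. The paper proves the lemma by a structural decomposition of the partially labelled tree at its root, which yields the quadratic convolution $C_0^{(k)}(z)=\sum_{i=0}^{k} z\,C_0^{(i)}(z)C_0^{(k-i)}(z)$ (with a separate equation for $k=1$), and then argues by induction on $k$ using the Catalan convolution $\sum_{i=1}^{k-1}{\rm Cat}(i-1){\rm Cat}(k-i-1)={\rm Cat}(k-1)$ together with $\bigl(1-2zC_0^{(0)}(z)\bigr)^{-1}=(1-4z)^{-1/2}$. You instead start from the count $\epsilon_0^{(k)}(n)=\binom{n+1}{k}{\rm Cat}(n)$ recorded just before the lemma (legitimately available), realise $\binom{n+1}{k}$ as $\frac{1}{k!}\frac{d^k}{dz^k}z^{n+1}$, and compute $C_0^{(k)}(z)=\frac{z^{k-1}}{k!}\frac{d^k}{dz^k}\bigl(\tfrac{1-\sqrt{1-4z}}{2}\bigr)$ in closed form; your derivative bookkeeping checks out, since $\frac{d^k}{dz^k}(1-4z)^{1/2}=-2^k(2k-3)!!\,(1-4z)^{(1-2k)/2}$ and $2^{k-1}(2k-3)!!=(2k-2)!/(k-1)!=k!\,{\rm Cat}(k-1)$, which reproduces eq.~(\ref{E:label0}) exactly (with the standard ${\rm Cat}(n)=\frac{1}{n+1}\binom{2n}{n}$ used throughout the paper; the normalization stated inside the lemma is a typo). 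What your approach buys is a one-shot, induction-free computation that makes the closed form mechanical; what the paper's approach buys is that it stays entirely within the generating-function recursion formalism mirroring the tree decompositions used elsewhere in the argument, at the cost of an induction and a convolution identity.
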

%%%
%%%%%%%%%%%%%%%%%%%%%%%%%%%%%%%%%%%%%%%%%%%%%%%%%%%%%%%%%%%%%%%%%%%%%%%%%%%%%%%%%%%%%%%%%
%%%

\begin{proof}
A unicellular map of genus $0$, $\mathfrak{m}_{0,n}^{(k)}$, is a planar tree with $k$
labeled vertices. We decompose $\mathfrak{m}_{0,n}^{(k)}$ starting from its root.
Suppose $v$ is the first vertex we encounter and $e$ is the leftmost edge of $v$.
Removing $e$ we obtain two subtrees, containing $k_1$ and $k_2$ labeled vertices,
respectively, where $k_1+k_2=k$.
Therefore, the generating function $C_0^{(k)}(z)$ satisfies
\begin{equation} \label{E:label_rec}
C_0^{(k)}(z) = \sum_{i=0}^k z\cdot C_0^{(i)}(z)\cdot C_0^{(k-i)}(z), \quad k>1,
\end{equation}
and
$$
C_0^{(1)}(z) = 1+ z\cdot C_0^{(0)}(z)\cdot C_0^{(1)}(z) + z\cdot C_0^{(1)}(z)\cdot C_0^{(0)}(z).
$$
from which $C_0^{(1)}(z) = (1-4z)^{-1/2}$ follows. Furthermore, we observe that
$C_0^{(0)}(z)$ and $C_0^{(1)}(z)$ satisfy eq.~(\ref{E:label0}).

We continue by induction on $k$.
By induction hypothesis, for $1<t<k-1$, $C_0^{(t)}(z)$ satisfies eq.~(\ref{E:label0}).
Then solving eq.~(\ref{E:label_rec}) yields
$$
C_0^{(k)}(z) = \frac{1}{1-2zC_0^{(0)}(z)} \sum_{i=1}^{k-1} z\cdot C_0^{(i)}(z)\cdot C_0^{(k-i)}(z).
$$
By assumption, we have $C_0^{(t)}(z) = {\rm Cat}(t-1) z^t (1-4z)^{-(2t-1)/2}$, $\forall t<k$, whence
\begin{eqnarray*}
C_0^{(k)}(z) & = & \frac{1}{1-2zC_0^{(0)}(z)} z^{k-1}(1-4z)^{-(2k-2)/2} \sum_{i=1}^{k-1}
{\rm Cat}(i-1){\rm Cat}(k-i-1) \\
& = & z^{k-1}(1-4z)^{-(2k-1)/2} {\rm Cat}(k-1).
\end{eqnarray*}
Thus $C_0^{(k)}(z)$ also satisfies eq.~(\ref{E:label_rec}) and the lemma holds by induction.
\qed
\end{proof}

In view of eq.~(\ref{E:eg_rec}) and $C_g^{(k)}(z) = \sum_{i=0}^{\infty} \epsilon_g^{(k)}(i) z^i$
we derive
\begin{equation} \label{E:cg_rec}
2g\cdot C_g^{(k)}(z)= \sum_{t=1}^{g} {k+2t+1 \choose 2t+1} C_{g-t}^{(k+2t+1)}(z) +
\sum_{t=1}^{g} {k+2t \choose 2t+1} C_{g-t}^{(k+2t)}(z).
\end{equation}
Iterating the recursion of eq.~(\ref{E:cg_rec}) $r$ times we obtain a sequence of
tuples $(g_i, k_i)_{1\le i\le r}$, where $g_i$ is the genus of $\mathfrak{m}_{0,n}^{k_i}$ and
$k_i$ is the respective number of labeled vertices.
By construction, we have $k_i-k_{i-1}=2(g_i-g_{i-1})+1$, if the new vertex from gluing
is not labeled, and $k_i-k_{i-1}=2(g_i-g_{i-1})$, if the new vertex is labeled. Or put
differently, whether or not we sliced an unlabelled or a labeled vertex.
Let $t_i=k_i-k_{i-1}$, then the key information is expressed via the two sequences of
integers:
$$
g_0=g_0<g_1<\ldots<g_r=g, \quad \quad 0=t_0=t_1\le t_2\le \cdots \le t_r=r-t,
$$
where $r$ equals the number of applications of the mapping $\Xi$, $g_i$ is the genus
of $\mathfrak{m}_{g_i,n}$. The number $t_{i+1}-t_i$ is a signature indicating whether
we label new vertex or not. In case of $t_{i+1}-t_i=1$ we label the newly obtained
vertex in the $i$th step of gluing, and in case of $t_{i+1}-t_i=0$ we do not.
Accordingly a glue path between $\mathfrak{m}_{0,n}^{(k)}$ and $\mathfrak{m}_{g,n}^{(0)}$
can be reconstructed from the the sequence of pairs $(g_i, t_i)_{1\le i\le r}$.

We next employ this construction in order to express the generating function of
unicellular maps, $C_g(z)$ as follows:
%%%
%%%%%%%%%%%%%%%%%%%%%%%%%%%%%%%%%%%%%%%%%%%%%%%%%%%%%%%%%%%%%%%%%%%%%%%%%
%%%
\begin{theorem}\label{T:1.0}
The generating function of unicellular map of genus $g$ has the form
\begin{equation}
C_g(z) = \sum_{t=0}^{g-1}\kappa_t^{(g)}\cdot  \frac{z^{2g+t}}{(1-4z)^{2g+1+t-\frac{1}{2}}},
\end{equation}
where $\kappa_t^{(g)} = a_t^{(g)} {\rm Cat}(2g+t)$ and
\begin{equation}\label{E:at}
a_t^{(g)}= \sum_{0=g_0<g_1<\cdots < g_r=g \atop 0=t_0=t_1\le t_2\le \cdots \le t_r=r-t}
\prod_{i=1}^r \frac{1}{2g_i} {2g+t-(2g_{i-1}+(i-1))+t_i \choose 2(g_i-g_{i-1})+1}.
\end{equation}
\end{theorem}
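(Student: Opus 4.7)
The strategy is to iterate the recursion \eqref{E:cg_rec} exactly $r$ times, reducing $C_g(z) = C_g^{(0)}(z)$ to a linear combination of planar-tree generating functions $C_0^{(k_0)}(z)$, and then substitute the closed form of Lemma \ref{L:label0} and collect by the final label count.

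I would first encode each iteration step by the genus decrement $g_i - g_{i-1}$ and a binary signature $\delta_i = t_i - t_{i-1} \in \{0,1\}$, where $\delta_i = 1$ marks that the vertex sliced at step $i$ was labeled (in gluing direction: the merged vertex is labeled). Starting from $k=0$, the labeled branch of \eqref{E:cg_rec} contributes zero because its binomial factor $\binom{0}{2t+1}$ vanishes, so necessarily $\delta_1 = 0$; this reproduces the constraint $t_1 = 0$ appearing in the sum. Iterating the label recurrence $k_{i-1} = k_i + 2(g_i - g_{i-1}) + 1 - \delta_i$ yields $k_0 = 2g + (r - t_r)$, so setting $t := r - t_r$ (which is the number of unlabeled slices) one obtains $k_0 = 2g + t$ and the constraint $t_r = r-t$ stated in \eqref{E:at}. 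Each iteration step contributes a factor $\frac{1}{2g_i}$ together with a binomial of lower entry $2(g_i - g_{i-1}) + 1$ whose upper entry is dictated by the cumulative sequence data and, in both the labeled and unlabeled branches of \eqref{E:cg_rec}, collapses to an expression in $g_{i-1}, i-1, t_i$ matching the numerator $2g + t - (2g_{i-1} + (i-1)) + t_i$ of \eqref{E:at}.

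Next, substituting Lemma \ref{L:label0} at $k = 2g + t$ gives the generating-function factor $\mathrm{Cat}(2g+t)\,z^{2g+t}(1-4z)^{-(2g+t+1/2)}$ (after consolidating the Catalan index shift), which matches exactly the asserted powers of $z$ and $(1-4z)$. Grouping all $r$-step iteration paths ending at the same value of $t$ then identifies the coefficient of $z^{2g+t}(1-4z)^{-(2g+t+1/2)}$ as $\mathrm{Cat}(2g+t)$ multiplied by the sum $a_t^{(g)}$ in \eqref{E:at}, giving $\kappa_t^{(g)} = a_t^{(g)}\,\mathrm{Cat}(2g+t)$.

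The main obstacle I expect is the combinatorial bookkeeping of the binomial's upper entry: the labeled and unlabeled branches of \eqref{E:cg_rec} have binomials with slightly different upper arguments, and one has to verify that both branches consolidate into the single expression involving $t_i$ (rather than $t_{i-1}$) that appears in \eqref{E:at}; the Catalan/plant index shift when substituting Lemma \ref{L:label0} must be handled consistently with this choice, since any off-by-one at a single step propagates through the product over all $r$ steps.
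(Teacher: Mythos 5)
Your overall strategy coincides with the paper's proof: iterate \eqref{E:cg_rec} starting from $C_g(z)=C_g^{(0)}(z)$ until only planar terms remain, substitute Lemma~\ref{L:label0}, and collect glue paths according to the terminal label count. However, the bookkeeping you yourself single out as the danger point is not actually resolved, and as written it is off by one at exactly the place where the content of the theorem lies. With your conventions ($\delta_i=t_i-t_{i-1}$ and $t:=r-t_r$ the number of unlabeled slicings) you correctly obtain the terminal label count $k_0=2g+t$; but Lemma~\ref{L:label0} at $k=2g+t$ gives $\mathrm{Cat}(2g+t-1)\,z^{2g+t-1}(1-4z)^{-(2g+t-\frac{1}{2})}$, not the factor $\mathrm{Cat}(2g+t)\,z^{2g+t}(1-4z)^{-(2g+t+\frac{1}{2})}$ that you assert. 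The theorem's $t$ is not the number of unlabeled slicings: since the genus-$g$ map carries no labels, the first slicing is always of an unlabeled vertex, and the theorem's $t$ counts the unlabeled slicings beyond this forced one, so the terminal planar term is $C_0^{(2g+t+1)}$. (Check $g=1$: a single slicing yields a tree with $3=2g+t+1$ labeled vertices and $t=0$, giving $\kappa_0^{(1)}=\frac{1}{2}\,\mathrm{Cat}(2)=1$, consistent with Table~\ref{T:kappa}.) Thus your $t$ ranges over $1,\dots,g$ while the theorem's ranges over $0,\dots,g-1$; ``consolidating the Catalan index shift'' is precisely the step that must be carried out, and as stated your two assertions ($k_0=2g+t$ and the displayed substitution) are mutually inconsistent.

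A second, related slip: the vanishing of the labeled branch at $k=0$ (the binomial is $\binom{2t}{2t+1}=0$, not $\binom{0}{2t+1}$, though both vanish) constrains the slicing of the original, label-free map, which in the gluing-ordered indexing of \eqref{E:at} is the last step $i=r$, not the first; in your cumulative convention it forces $t_r=t_{r-1}$, not $t_1=0$. Be aware that the literal side condition $0=t_0=t_1$ in \eqref{E:at} is itself not consistent with the printed upper entry of the binomial: for $g=1$, $r=1$ one needs the factor $\frac{1}{2}\binom{3}{3}$, i.e.\ effectively $t_1=1$; the consistent reading is that $t_i$ is one plus the number of labeled gluings among steps $1,\dots,i-1$, the upper entry then being exactly the number of labeled vertices available before gluing step $i$ --- which, as you correctly observe, is the same in both branches of \eqref{E:cg_rec}, so your branch-consolidation concern is unproblematic. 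In short: right approach, same as the paper's, but the index conventions must be fixed and verified (e.g.\ against $g=1,2$ and Table~\ref{T:kappa}) before the exponents in Theorem~\ref{T:1.0} and the coefficient formula \eqref{E:at} actually come out.
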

%%%
%%%%%%%%%%%%%%%%%%%%%%%%%%%%%%%%%%%%%%%%%%%%%%%%%%%%%%%%%%%%%%%%%%%%%%%%%
%%%
We present the the coefficients $\kappa_t^{(g)}$ for genera $g\le 5$ in
Table~\ref{T:kappa}.
%%%
%%%%%%%%%%%%%%%%%%%%%%%%%%%%%%%%%%%%%%%%%%%%%%%%%%%%%%%%%%%%%%%%%%%%%%%%%
%%%
\begin{table}
\begin{center}
\begin{tabular}{c|ccccc}
 & g=1 & 2 & 3 & 4 & 5 \\
\hline
  t=0 & 1 & 21 & 1485 & 225225 & 59520825 \\
  1 &  & 105 & 18018 & 4660227 & 1804142340 \\
  2 &  &  & 50050 & 29099070 & 18472089636 \\
  3 &  &  &  & 56581525 & 78082504500 \\
  4 &  &  &  &  &  117123756750
\end{tabular}
\end{center}
\caption{\small Theorem~\ref{T:1.0}: the coefficients $\kappa_t^{(g)}$.
}\label{T:kappa}
\end{table}
%%%
%%%%%%%%%%%%%%%%%%%%%%%%%%%%%%%%%%%%%%%%%%%%%%%%%%%%%%%%%%%%%%%%%%%%%%%%%
%%%
\begin{proof}
Using eq.~(\ref{E:cg_rec}), we have
\begin{eqnarray*}
C_g(z) &=& \frac{1}{2g} \sum_{g_1=0}^{g-1} C_{g_1}^{2(g-g_1)+1}(z) \\
& = & \frac{1}{2g} \sum_{g_1=0}^{g-1} \frac{1}{2g_1}
\sum_{g_1>g_2} ( {2(g-g_2)+2 \choose 2(g_1-g_2)+1}
 C_{g_2}^{2(g-g_2)+2}(z) + {2(g-g_2)+1 \choose 2(g_1-g_2)+1}
 C_{g_2}^{2(g-g_2)+1}(z)  ) \\
&\vdots &\\
&=& \sum_{t=0}^{g-1} a_t^{(g)} C_{0}^{2g+t+1}(z)) \\
&=& \sum_{t=0}^{g-1} a_t^{(g)} {\rm Cat}(2g+t) z^{2g+t} (1-4z)^{-(2g+t+1-\frac{1}{2})} \\
&=& \sum_{t=0}^{g-1} \kappa_t^{(g)} z^{2g+t} (1-4z)^{-(2g+t+1-\frac{1}{2})},
\end{eqnarray*}
whence the theorem. \qed
\end{proof}

In \cite{Reidys:top1}, the generating function $C_g(z)$ has been shown to have the form
$$
C_g(z)=\frac{P_g(z)}{(1-4z)^{3g-\frac{1}{2}}},
$$
where $P_g(z)$ is a certain  polynomial.

In view of Theorem~\ref{T:1.0} we have the following expression for the $P_g(z)$ polynomials:
%%%
%%%%%%%%%%%%%%%%%%%%%%%%%%%%%%%%%%%%%%%%%%%%%%%%%%%%%%%%%%%%%%%%%%%%%%%%%%%%%%%%%%%%%%%%%%%%%%
%%%
\begin{corollary}
The polynomial $P_g(z)$ is given by
\begin{equation}
P_g(z) = \sum_{t=0}^{g-1} \kappa_t^{(g)} (1-4z)^{g-1-t}.
\end{equation}
\end{corollary}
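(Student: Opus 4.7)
The plan is to prove the corollary by directly equating the two closed-form expressions for $C_g(z)$ that are now available and reading off $P_g(z)$. On one side, Theorem~\ref{T:1.0} just established
\[
C_g(z) = \sum_{t=0}^{g-1}\kappa_t^{(g)}\cdot \frac{z^{2g+t}}{(1-4z)^{2g+t+\frac{1}{2}}}.
\]
On the other side, \cite{Reidys:top1} supplies the representation $C_g(z) = P_g(z)/(1-4z)^{3g-\frac{1}{2}}$ for some polynomial $P_g(z)$. The strategy is simply to multiply both representations through by $(1-4z)^{3g-\frac{1}{2}}$, which isolates $P_g(z)$ as a finite sum built out of the summands provided by Theorem~\ref{T:1.0}.

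After the multiplication the exponent of $(1-4z)$ in the $t$-th summand collapses to
\[
\bigl(3g - \tfrac{1}{2}\bigr) - \bigl(2g+t+\tfrac{1}{2}\bigr) = g-1-t,
\]
which is a nonnegative integer for every $0\le t\le g-1$. Hence each summand is genuinely polynomial in $z$, no cancellation among terms of the sum must be tracked, and the resulting expression is polynomial of degree at most $g-1$ in $(1-4z)$, exactly the form stipulated in \cite{Reidys:top1}. Reading off the coefficient of each power $(1-4z)^{g-1-t}$ then yields the claimed expression, with the coefficients $\kappa_t^{(g)}$ carried over unchanged from Theorem~\ref{T:1.0}.

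There is no real obstacle here: the argument is a one-step piece of exponent bookkeeping applied to a finite sum. The only point warranting verification is that each exponent $g-1-t$ is a nonnegative integer, which is what guarantees the right-hand side is polynomial and hence must coincide with the $P_g(z)$ of \cite{Reidys:top1}; indeed $(1-4z)^{3g-\frac{1}{2}}$ is invertible as a formal power series around $z=0$, so $P_g(z)$ is uniquely determined by $C_g(z)$ and the identification is forced.
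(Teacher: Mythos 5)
Your overall route is exactly the paper's (implicit) one: the corollary is offered ``in view of Theorem~\ref{T:1.0}'', i.e.\ it is obtained precisely by equating the two representations of $C_g(z)$ and clearing the denominator $(1-4z)^{3g-\frac12}$, and your remark that $(1-4z)^{3g-\frac12}$ is invertible as a formal power series, so that $P_g(z)$ is uniquely determined by $C_g(z)$, is the right way to make that identification rigorous.

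However, your final ``reading off'' step is not accurate as written. Multiplying the $t$-th summand of Theorem~\ref{T:1.0} by $(1-4z)^{3g-\frac12}$ yields $\kappa_t^{(g)}\,z^{2g+t}(1-4z)^{g-1-t}$, not $\kappa_t^{(g)}(1-4z)^{g-1-t}$: the monomial $z^{2g+t}$ does not disappear and cannot be absorbed into the constant coefficient. What your computation actually proves is
\[
P_g(z)=\sum_{t=0}^{g-1}\kappa_t^{(g)}\,z^{2g+t}\,(1-4z)^{g-1-t},
\]
which differs from the displayed formula of the corollary. The discrepancy is real, not cosmetic: for $g=1$ Theorem~\ref{T:1.0} gives $C_1(z)=z^2(1-4z)^{-5/2}$, hence $P_1(z)=z^2$, whereas the formula you claim to have derived would make $P_1(z)$ the constant $\kappa_0^{(1)}$. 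The omission of $z^{2g+t}$ appears to originate in the paper's own statement, but your proof should either carry that factor explicitly or flag the correction; as it stands, the assertion that ``the coefficients $\kappa_t^{(g)}$ carry over unchanged'' to the stated form silently drops a factor of $z^{2g+t}$ in every term and therefore does not establish the displayed identity.
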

%%%
%%%%%%%%%%%%%%%%%%%%%%%%%%%%%%%%%%%%%%%%%%%%%%%%%%%%%%%%%%%%%%%%%%%%%%%%%%%%%%%%%%%%%%%%%%%%%%
%%%

%%%
%%%%%%%%%%%%%%%%%%%%%%%%%%%%%%%%%%%%%%%%%%%%%%%%%%%%%%%%%%%%%%%%%%%%%%%%%%%%%%%%%%%%%%%%%%%%%%
%%%
\section{Shapes of fixed genus}\label{S:shape}
%%%%
%%%%%%%%%%%%%%%%%%%%%%%%%%%%%%%%%%%%%%%%%%%%%%%%%%%%%%%%%%%%%%%%%%%%%%%%%%%%%%%%%%%%%%%%%%%%%%
%%%%

In this section we study shapes of fixed topological genus. Since there are only finitely
many shapes for fixed genus $g$ \cite{Reidys:11a}, their generating function is a polynomial.
We give an explicit formula for the coefficients of the shape-polynomial, in which the same
$\kappa^{(g)}_i$ coefficients appear as in the generating function of unicellular maps of
genus $g$ in Theorem~\ref{T:1.0}.

We have shown in Section~\ref{S:basic} that a shape corresponds to a unicellular map in
which each vertex has degree greater than three, $\mathfrak{s}_{g,n}$. Applying $\Xi$ iteratively
to $\mathfrak{s}_{g,n}$ we derive a tree. By construction, any unlabeled vertex in this tree
originally comes from $\mathfrak{s}_{g,n}$ and thus retains its degree.

Let $\mathbb{S}^{(k)}_{g,n}$, denote the set of unicellular maps $\mathfrak{m}^{(k)}_g$ having $k$
labeled vertices, in which any unlabeled vertex has degree greater than or equal to three.
In particular, the set of unicellular maps corresponding to shapes of genus $g$ having $n$
edges is $\mathbb{S}^{(0)}_{g,n}$. In the following, let $\mathfrak{s}_{g,n}^{(k)}$ denote
an element in $\mathbb{S}^{(k)}_{g,n}$.

Since neither $\Lambda$ nor $\Xi$ alter unlabelled vertices we have induced bijections
$$
\Lambda\colon (\mathfrak{s}_{g-t,n}^{(k+2t+1)}, V_{2t+1}) \rightarrow (\mathfrak{s}_{g,n}^{(k)},
\tau)
\quad
\Xi\colon (\mathfrak{s}_{g,n}^{(k)}, \tau) \rightarrow  (\mathfrak{s}_{g-t,n}^{(k+2t+1)}, V_{2t+1}).
$$
Indeed, $\Xi$, slices a vertex together with a trisection into a sequence of labeled vertices.
and thus does not change the degree of unlabeled vertices in the map. Furthermore, $\Lambda$
glues three or more labeled vertices into one vertex, which has accordingly minimum degree
$3$.

Let $\eta_g(n,k)$ denote the cardinality of $\mathbb{S}^{(k)}_{g,n}$, $k\ge 0$. In view of the
above and eq.~(\ref{E:cg_rec}) we have
%%%
%%%%%%%%%%%%%%%%%%%%%%%%%%%%%%%%%%%%%%%%%%%%%%%%%%%%%%%%%%%%%%%%%%%%%%%%%%%%%%%%%%%%%%%%%%%
%%%
\begin{equation}\label{E:eta}
2g\cdot \eta_g(n,k) = \sum_{i=1}^g {k+2i+1 \choose 2i+1} \eta_{g-i}(n,k+2i+1) +
\sum_{i=1}^{g} {k+2i \choose 2i+1}\eta_{g-i}(n,k+2i),
\end{equation}
where ${n\choose m}=0$, for $n<m$.
%%%
%%%%%%%%%%%%%%%%%%%%%%%%%%%%%%%%%%%%%%%%%%%%%%%%%%%%%%%%%%%%%%%%%%%%%%%%%%%%%%%%%%%%%%%%%%%
%%%

Due to the compatibility of slicing and gluing with the vertex degree of unlabelled
vertices we can conclude

%%%
%%%%%%%%%%%%%%%%%%%%%%%%%%%%%%%%%%%%%%%%%%%%%%%%%%%%%%%%%%%%%%%%%%%%%%%%%%%%%%%%%%%%%%%%%%%
%%%
\begin{proposition}\label{P:1.1}
The number of shapes of genus $g$ is given by
\begin{equation}\label{E:kk}
\eta_g(n,0) = \sum_{t=0}^{g-1} a_t^{(g)} \eta_0(n,2g+t+1),
\end{equation}
where the $a_t^{(g)}$ are given by eq.~(\ref{E:at}).
\end{proposition}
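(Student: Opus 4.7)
The plan is to iterate the recursion (\ref{E:eta}) for $\eta_g(n,0)$ in exact parallel with the iteration of (\ref{E:cg_rec}) carried out in the proof of Theorem~\ref{T:1.0}. The central observation is that (\ref{E:eta}) carries the same binomial coefficients and same genus--label shifts as (\ref{E:cg_rec}); this is secured by the compatibility remark immediately preceding the proposition, namely that slicing $\Xi$ preserves the degree of every unlabelled vertex, and gluing $\Lambda$ produces either a labelled vertex (unconstrained) or an unlabelled vertex, which is automatically of degree $\ge 3$ since it is built from at least three labelled ones. Consequently, the bijective proof of (\ref{E:cg_rec}) restricts verbatim to shape-maps and justifies (\ref{E:eta}).

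Starting from $\eta_g(n,0)$, the second sum in (\ref{E:eta}) vanishes since $\binom{2i}{2i+1}=0$, so the first expansion gives
$$
\eta_g(n,0)=\frac{1}{2g}\sum_{g_1=0}^{g-1}\eta_{g_1}(n,2(g-g_1)+1).
$$
On every subsequent expansion both sums contribute, since the intermediate counts $\eta_{g_i}(n,k_i)$ involve $k_i>0$. I would then parametrise the iteration by two sequences exactly as in Theorem~\ref{T:1.0}: an increasing sequence of genera $0=g_0<g_1<\cdots<g_r=g$, and a signature $0=t_0\le t_1\le\cdots\le t_r$ with $t_i-t_{i-1}\in\{0,1\}$ recording, at each gluing step, whether the newly created vertex is labelled ($t_i-t_{i-1}=0$) or not ($t_i-t_{i-1}=1$).

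Following the same bookkeeping, a single iteration path accumulates the weight
$$
\prod_{i=1}^r\frac{1}{2g_i}\binom{2g+t-(2g_{i-1}+(i-1))+t_i}{2(g_i-g_{i-1})+1},
$$
and terminates at $\eta_0(n,2g+t+1)$ with $t=r-t_r$. Grouping paths by $t\in\{0,\ldots,g-1\}$ recovers the coefficient $a_t^{(g)}$ of (\ref{E:at}), yielding the claimed identity. The only real content, and hence the main potential obstacle, is the compatibility of $\Xi$ and $\Lambda$ with the degree constraint defining $\mathbb{S}^{(k)}_{g,n}$; once this is settled, the argument is a direct transcription of the iteration in Theorem~\ref{T:1.0}, with $C_g^{(k)}(z)$ replaced throughout by $\eta_g(n,k)$ and no evaluation at $\eta_0$ performed.
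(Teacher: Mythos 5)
Your proposal matches the paper's own argument: the paper likewise derives the identity by iterating the recursion (\ref{E:eta}) — justified exactly by the compatibility of $\Xi$ and $\Lambda$ with the degree of unlabelled vertices — starting from $\eta_g(n,0)=\frac{1}{2g}\sum_{g_1}\eta_{g_1}(n,2(g-g_1)+1)$ and repeating the substitution until reaching $\eta_0(n,\cdot)$, grouping the iteration paths by the label-signature to recover the coefficients $a_t^{(g)}$ of (\ref{E:at}), in direct parallel with the proof of Theorem~\ref{T:1.0}. Aside from an immaterial reversal of the labelling convention in the signature $t_i-t_{i-1}$, this is essentially the same proof.
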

%%%
%%%%%%%%%%%%%%%%%%%%%%%%%%%%%%%%%%%%%%%%%%%%%%%%%%%%%%%%%%%%%%%%%%%%%%%%%%%%%%%%%%%%%%%%%%%
%%%
\begin{proof}
Iterating the recursion in eq.~\ref{E:eta} we reduce the genus. In view of
$$
\eta_g(n,0) = \frac{1}{2g} \sum_{g_1=0}^{g-1} \eta_{g_1}(n,2(g-g_1)+1),
$$
we then substitute $\eta_{g_1}(n,2(g-g_1)+1)$ using eq.~(\ref{E:cg_rec}) and obtain
\begin{eqnarray*}
\eta_g(n,0) & = & \frac{1}{2g} \sum_{g_1=0}^{g-1} \frac{1}{2g_1}
\sum_{g_1>g_2}( {2(g-g_2)+2 \choose 2(g_1-g_2)+1}
\eta_{g_2}(n,2(g-g_2)+2)  \\
& + &
{2(g-g_2)+1 \choose 2(g_1-g_2)+1}
\eta_{g_2}(n,2(g-g_2)+1) ).
\end{eqnarray*}
Continuing this substitution we arrive at $\eta_0(n,w)$ for some integer $w$.
Since in each substitution, the number of labeled vertices increases by
either $2i$ or $2i+1$, we derive
$$
\eta_g(n,0) = \sum_{t=0}^{g-1} a_t^{(g)} \eta_0(n,2g+t+1),
$$
where the coefficients, $a_t^{(g)}$, are given by eq.~\ref{E:at}.
\end{proof}

At this point we observe that it possible to analyze the terms $\eta_0(n,2g+t+1)$
further. The idea is to ``remove'' all unlabeled vertices from any partially labelled
tree, thereby reducing the recursion to fully labelled trees. The latter are then
enumerated by Catalan numbers.

This removal is facilitated by observing that we can restrict the bijection of
R\'{e}my to $\mathbb{S}^{(k)}_{0,n}$-trees.

To this end, let us first recall R\'{e}my's bijection for planar trees \cite{Remy:85}.
%%%
%%%%%%%%%%%%%%%%%%%%%%%%%%%%%%%%%%%%%%%%%%%%%%%%%%%%%%%%%%%%%%%%%%%%%%%%%%
%%%
\begin{theorem}\label{T:remy}
Let $\epsilon_0(n)$ denote the number of planar trees with $n$ edges.
Then we have the recursion
$$
(n+1) \epsilon_0(n) = 2(2n-1) \epsilon_0(n-1).
$$
\end{theorem}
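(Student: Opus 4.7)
The plan is to prove the recursion bijectively, since it is the bijection itself (and not merely the resulting identity) that the subsequent argument will restrict to shape-trees. I would interpret the left-hand side $(n+1)\epsilon_0(n)$ as counting pointed planar trees $(T,v)$, where $T$ is a planar tree with $n$ edges and $v$ is one of its $n+1$ vertices. I would interpret the right-hand side $2(2n-1)\epsilon_0(n-1)$ as counting triples $(T',c,\varepsilon)$, where $T'$ is a planar tree with $n-1$ edges, $c$ is one of the $2n-1$ corners of $T'$ (the $2(n-1)$ corners coming from the sum of vertex degrees, together with one additional distinguished corner at the root), and $\varepsilon\in\{0,1\}$ is a sign bit.

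The forward map $\Phi\colon(T',c,\varepsilon)\mapsto(T,v)$ would be defined by attaching a new pendant edge at the corner $c$ of $T'$, producing a planar tree $T$ with $n$ edges and a new leaf $w$. One then sets $v=w$ if $\varepsilon=0$ and $v=u$ (the attachment vertex of $T'$) if $\varepsilon=1$. The inverse map $\Psi\colon(T,v)\mapsto(T',c,\varepsilon)$ selects a canonical pendant edge $\tilde e$ of $T$ according to a fixed rule: if $v$ is a leaf of $T$, take $\tilde e$ to be the unique edge incident to $v$ and set $\varepsilon=0$; otherwise, take $\tilde e$ to be a canonical pendant (for instance the first leaf visited in the contour order starting at $v$) and set $\varepsilon=1$. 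Removing $\tilde e$ yields $T'$, and $c$ is recorded as the corner of $T'$ vacated by the removed edge.

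The main obstacle is to make the canonical-pendant rule in the non-leaf case well-defined and consistent with the corner-counting convention, so that $\Phi\circ\Psi$ and $\Psi\circ\Phi$ are identities in all boundary cases --- in particular when $v$ is the root, when the attachment corner coincides with the distinguished root corner, and when $\tilde e$ happens to be the only pendant descending from $v$. Once a consistent contour/corner convention is fixed (for example, orient all corners counterclockwise and single out the corner immediately to the right of the root half-edge), these identifications become routine casework.

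A purely algebraic alternative would simply substitute $\epsilon_0(n)=\tfrac{1}{n+1}\binom{2n}{n}$ and verify
\[
(n+1)\cdot\tfrac{1}{n+1}\tbinom{2n}{n}=\tbinom{2n}{n}=2(2n-1)\cdot\tfrac{1}{n}\tbinom{2n-2}{n-1},
\]
but this shortcut does not expose the combinatorial data on which the next step --- the restriction of the bijection to $\mathbb{S}^{(k)}_{0,n}$-trees, where unlabelled vertices must retain degree $\ge 3$ --- actually depends.
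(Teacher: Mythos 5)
Your interpretation of the two sides of the identity matches the paper's (pointed trees on the left, a marked sector plus a binary choice on the right), but the $\varepsilon=1$ branch of your forward map is not R\'emy's second move and is in fact not a bijection. Under your rule the newly inserted vertex is always a pendant leaf and the labeled vertex is its attachment vertex $u$; consequently the image of the $\varepsilon=1$ branch consists only of pairs $(T,v)$ in which $v$ has at least one leaf child, and each such pair is reached once for every leaf child of $v$. Thus a marked internal vertex with no leaf child (for instance the middle vertex of a path with four or more edges) is never produced, while a vertex with two leaf children is produced twice; no corner or root convention can repair this. Your proposed inverse in the non-leaf case (delete the first leaf met in contour order from $v$) does not invert $\Phi$ either: that leaf's attachment vertex is in general not $v$, so $\Phi\circ\Psi$ marks the wrong vertex. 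The correct second move, which is what the paper takes from R\'emy \cite{Remy:85} and depicts in Figure~\ref{F:remy}(C), inserts the new labeled vertex \emph{in place of} the vertex containing the marked sector and transfers the subtree on the left of the sector to it, so that in this case the inserted vertex is internal; its inverse removes a marked non-leaf vertex by the corresponding contraction (governed by its leftmost subtree), not by removing a pendant edge. This is not ``routine casework'' around your map; it is a different map.

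The defect also undermines the very purpose you state for giving a bijective proof. In both of your branches the \emph{unlabeled} vertex added to the tree is a leaf, i.e.\ has degree $1$, whereas the restriction to $\mathbb{S}^{(k)}_{0,n}$-trees requires inserting unlabeled vertices of degree at least $3$; the paper's shape-sector lemma explicitly discards the leaf insertion and works only with the non-leaf insertion of Figure~\ref{F:remy}(C), analyzing how it splits the children of the sector's vertex. So even granting your map were bijective, it could not be restricted as needed. Your algebraic fallback does verify the identity (and the paper itself essentially only cites R\'emy and describes the bijection informally), but the bijective content required downstream is precisely the leaf/non-leaf dichotomy of R\'emy's construction, which your $\varepsilon=1$ move does not reproduce.
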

%%%
%%%%%%%%%%%%%%%%%%%%%%%%%%%%%%%%%%%%%%%%%%%%%%%%%%%%%%%%%%%%%%%%%%%%%%%%%%
%%%
The bijection of Theorem~\ref{T:remy} associates a planar tree having $n$ edges and
a labeled vertex to a planar tree with $(n-1)$ edges with a labeled sector.
It is constructed as follows: observing that in a planar tree having $n$ edges,
there are $n+1$ vertices and $2n-1$ sectors, R\'{e}my's bijection, illustrated in
Figure~\ref{F:remy}, entails two ways of inserting a vertex into a labeled sector.
This vertex-insertion generates from a planar tree with
$n-1$ edges and a labelled sector a planar tree with $n$ edges and a labelled vertex.
The process can be reversed, i.e., a planar tree with $n$ edges and a labeled vertex
can be re-tracked to a planar tree with $n-1$ edges and a labeled sector.
Depending on the labeled vertex being a leave or not, one derives a planar tree having
two types of labeled sectors.

\begin{figure}[ht]
\begin{center}
\includegraphics[width=0.8\columnwidth]{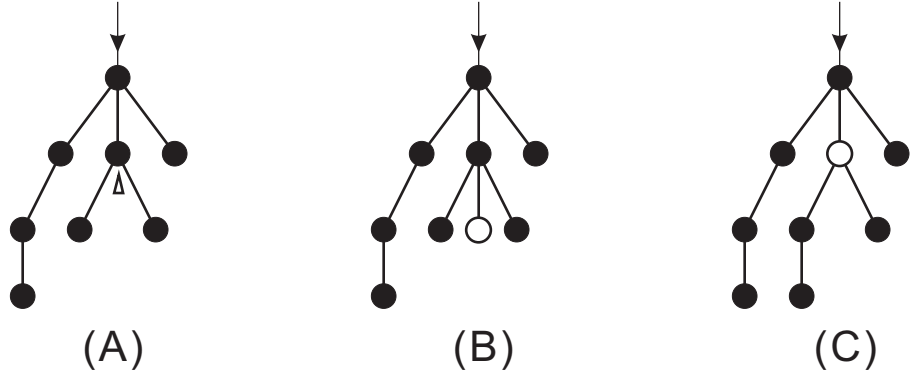}
\end{center}
\caption{\small  R\'{e}my's bijection: two ways of obtaining a planar tree with $n$ edges
and a labeled vertex from a planar tree with $n-1$ edges with a labeled sector. We pass from
(A) to (B) by inserting a labeled vertex as a leaf to the labeled sector and from (A) to (C)
by replacing the vertex containing the sector by the labeled vertex, and carrying the
subtree on the left of the sector as its leftmost subtree.
This case applies, if the labeled vertex is not a leave.
}
\label{F:remy}
\end{figure}

We shall prove that R\'{e}my's procedure contracts unlabeled vertices of a
$\mathbb{S}^{(k)}_{0,n+1}$-tree into a particular type of sector in the resulting
$\mathbb{S}^{(k)}_{0,n}$-tree. These sectors are referred to as {\it shape-sector}
and are defined as follows: suppose we are given a $\mathbb{S}^{(k)}_{0,n}$-tree.
A shape-sector is a sector for which R\'{e}my's procedure, inserting a non-leaf
unlabeled vertex, generates a $\mathbb{S}^{(k)}_{0,n+1}$-tree.

%%%
%%%%%%%%%%%%%%%%%%%%%%%%%%%%%%%%%%%%%%%%%%%%%%%%%%%%%%%%%%%%%%%%%%%%%%%%%%%%%%%%%%%%%%%%%%%
%%%
\begin{lemma}
A $\mathbb{S}^{(k)}_{0,n}$-tree contains exactly $(2k-n-2)$ shape-sectors.
\end{lemma}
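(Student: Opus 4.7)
The plan is to count shape-sectors directly by examining, at each non-plant vertex, which sectors yield valid shape-trees after R\'emy's non-leaf insertion of an unlabeled vertex. For the setup I would root a $\mathbb{S}^{(k)}_{0,n}$-tree at the plant, so that every non-plant vertex $v$ has $c(v) = \deg(v) - 1$ children with $\sum_{v\text{ non-plant}} c(v) = n$ (each of the $n$ non-plant edges being the child-edge of exactly one non-plant vertex), while $n+1-k$ of the $n+1$ non-plant vertices are unlabeled of degree at least three. The sectors at $v$ are indexed by $i = 0, 1, \dots, c(v)$.

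I would then analyse R\'emy's non-leaf insertion at sector $i$ of a non-plant vertex $v$ with $c = c(v)$ children. The new vertex $u$ takes $v$'s place as a child of $v$'s parent; $u$'s children are the first $i$ subtrees of $v$ (pulled up) followed by $v$ itself, which now carries its remaining $c-i$ children. A direct count gives $\deg(u) = i + 2$ and the new $\deg(v) = c - i + 1$. Since $u$ is inserted as an unlabeled vertex, the resulting tree lies in $\mathbb{S}^{(k)}_{0,n+1}$ precisely when $\deg(u) \geq 3$, forcing $i \geq 1$, and, when $v$ itself is unlabeled, when the new $\deg(v) \geq 3$, forcing $i \leq c - 2$.

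Consequently, each labeled vertex $v$ contributes $c(v)$ shape-sectors (from $i \in \{1,\dots,c(v)\}$), each unlabeled non-plant vertex $v$ (necessarily with $c(v) \geq 2$) contributes $c(v) - 2$ shape-sectors (from $i \in \{1,\dots,c(v)-2\}$), and the plant contributes none, since both of its two possible sectors yield $u$ of degree at most two. Summing then gives
\[
\sum_{v \text{ labeled}} c(v) \;+\; \sum_{v \text{ unlabeled non-plant}} \bigl(c(v) - 2\bigr) \;=\; \sum_{v \text{ non-plant}} c(v) \;-\; 2(n+1-k) \;=\; n - 2(n+1-k) \;=\; 2k - n - 2,
\]
which is precisely the asserted count.

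The main delicate point will be verifying that R\'emy's non-leaf insertion acts uniformly at every non-plant vertex, including the plant's neighbour $r$, where $u$ simply becomes the new neighbour of the plant. This uniformity is what guarantees that the two degree conditions $i \geq 1$ and $i \leq c - 2$ already capture all shape-sectors without any extra root-case correction. A secondary routine check is that the extremal case $i = c(v)$ at a labeled $v$ (where $v$ becomes a leaf after insertion) is indeed a valid shape-sector, so that it is correctly included in the vertex-wise count $c(v)$.
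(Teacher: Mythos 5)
Your proof is correct and follows essentially the paper's own argument: you derive the same two degree constraints for R\'emy's non-leaf insertion (the new unlabeled vertex must have degree $\ge 3$, and at an unlabeled vertex the pushed-down vertex must retain degree $\ge 3$) and then count the surviving sectors, summing per-vertex contributions $c(v)$ resp.\ $c(v)-2$ rather than, as the paper does, subtracting the $n+1$ and $2(n+1-k)$ excluded sectors from the total of $2n+1$. Your left-right mirrored reading of the insertion and your explicit bookkeeping for the plant (which the paper leaves implicit) change which sectors are declared shape-sectors but not how many there are per vertex, so the resulting count $2k-n-2$ agrees with the paper's.
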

%%%
%%%%%%%%%%%%%%%%%%%%%%%%%%%%%%%%%%%%%%%%%%%%%%%%%%%%%%%%%%%%%%%%%%%%%%%%%%%%%%%%%%%%%%%%%%%
%%%
\begin{proof}
In order to construct a $\mathbb{S}^{(k)}_{0,n+1}$ from a $\mathbb{S}^{(k)}_{0,n}$-tree
by R\'{e}my's procedure, we need to ensure that the newly inserted, unlabeled vertex
has at least degree $3$ and that it does not reduce the degree of the other unlabeled
vertex. We shall consider only the insertion not producing a leaf, since it is a
vertex of degree $1$.
Given a sector $\tau$ in a vertex $v$, assume the children
of $v$ are indexed counterclockwise $v_1\ldots v_m$, where $m\ge 2$.
The sector $\tau$ partitions the $v$-children
into two blocks:
$$
\{v_1 \ldots, v_\ell\}\quad  \text{\rm and}\quad
\{v_{\ell+1} \ldots, v_m\}.
$$
We make two observations: (a) the newly inserted vertex has at least degree three
if $\{v_{\ell+1} \ldots, v_m\}\neq \varnothing$, see Figure~\ref{F:shape preserved}.
(b) the vertex, that is pushed ``down'' by the newly inserted vertex retains degree
$\ge 3$, if and only if $\ell\ge 2$, i.e.~if $\tau$ is to the right of the second $v$-child
in counterclockwise order.\\
The latter applies by construction only to the case where the pushed-down vertex is
unlabeled.

\begin{figure}[ht]
\begin{center}
\includegraphics[width=0.9\columnwidth]{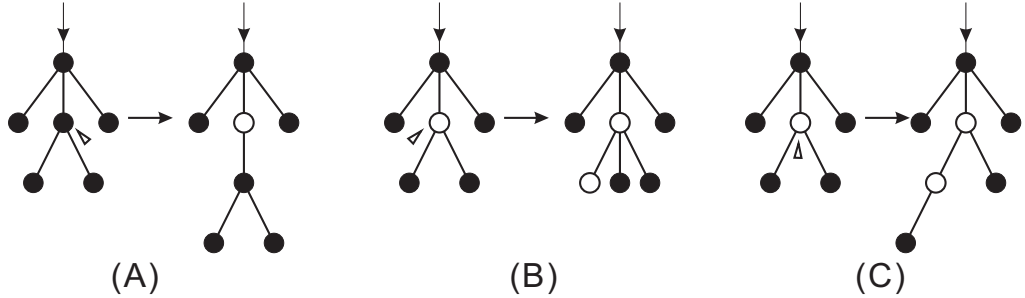}
\end{center}
\caption{\small How to construct non shape-sectors via R\'{e}my's-procedure.
The following insertions produce an unlabeled vertex of degree $2$:
(A) the sector is in a labeled vertex and $\{v_{\ell+1} \ldots, v_m\}= \varnothing$, and
(B) ($\ell=0$) and (C) ($\ell=1$) if the sector is in an unlabeled vertex.
}
\label{F:shape preserved}
\end{figure}

Therefore, a sector in a vertex $v$ is a shape-sector if and only if
\begin{itemize}
\item $\tau$ has the property $\{v_{\ell+1} \ldots, v_m\}\neq \varnothing$,
\item if $v$ is unlabeled, $\tau$ satisfies furthermore
      $\vert\{v_{1}, \ldots, v_\ell\}\vert\ge 2$.
\end{itemize}
There are in total $2n+1$ sectors in a tree. The first criterion rules
out $n+1$ sectors, since each vertex has one such sector. The second criterion
rules out $2$ sectors from an unlabeled vertices and there are $n+1-k$ of them.
Accordingly, the number of shape-sectors is given by
$$
2n+1 - (n+1) - 2(n+1-k) = 2k-n-2.
$$
Any of the $2k-n-2$ shape preserved sectors produces a $\mathbb{S}^{(k)}_{0,n+1}$-tree by
R\'{e}my's procedure, when inserting a non-leaf vertex and the lemma follows.
\end{proof}

%%%
%%%%%%%%%%%%%%%%%%%%%%%%%%%%%%%%%%%%%%%%%%%%%%%%%%%%%%%%%%%%%%%%%%%%%%%%%%%%%%%%%%%%%%%%%%
%%%
\begin{corollary}\label{C:1.1}
Let $\tau$ denote a shape preserved sector and $\mathfrak{m} \in \mathbb{S}^{(k)}_{0,n}$.
Then
$$
\rho\colon (\mathfrak{m}, \tau) \rightarrow (\mathfrak{m}', v)
$$
is a bijection, where $v$ is an unlabeled vertex in $\mathfrak{m}'$ and $\mathfrak{m}' \in
\mathbb{S}^{(k)}_{0,n+1}$. In particular we have
\begin{eqnarray*}
(2k-n-2) \eta_0(n,k) & = &  (n+1-k) \eta_0(n+1,k) \\
\eta_0(n,k)= {2k-(k-1)-2 \choose n+1-k} \eta_0(k-1,k) & = & {k-1 \choose n+1-k} {\rm Cat}(k-1).
\end{eqnarray*}
\end{corollary}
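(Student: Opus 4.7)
The plan has three steps: establish $\rho$ as a restriction of R\'emy's bijection, derive the first-order recursion by double-counting $\rho$, and iterate it down to a fully-labeled base case to obtain the closed form. For the bijection I would take the non-leaf branch of R\'emy's procedure (Figure~\ref{F:remy}(C)): given $(\mathfrak{m},\tau)$ with $\mathfrak{m}\in\mathbb{S}^{(k)}_{0,n}$ and $\tau$ a shape-sector, insert at $\tau$ a new vertex and declare it unlabeled, giving $(\mathfrak{m}',v)$. The two criteria for $\tau$ to be a shape-sector from the preceding lemma are precisely the conditions needed to guarantee (i) the newly inserted unlabeled vertex has degree $\geq 3$, and (ii) when the ambient vertex was already unlabeled it is not pushed below degree $3$; hence $\mathfrak{m}'\in\mathbb{S}^{(k)}_{0,n+1}$. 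For the inverse, start with an unlabeled $v$ in $\mathfrak{m}'\in\mathbb{S}^{(k)}_{0,n+1}$: the shape condition forces $\deg v\geq 3$, so $v$ is not a leaf and R\'emy's non-leaf deletion applies, producing a tree $\mathfrak{m}$ on $n$ edges with a distinguished sector $\tau$. Re-running the degree bookkeeping shows $\tau$ is a shape-sector and $\mathfrak{m}\in\mathbb{S}^{(k)}_{0,n}$, establishing $\rho$ as a bijection.

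Next, I would double-count the bijection $\rho$ to obtain the first-order recursion. The left side is $(2k-n-2)\,\eta_0(n,k)$ by the preceding lemma, while the right side equals (number of unlabeled vertices per tree)\,$\cdot\,\eta_0(n+1,k)$; comparing the two gives the stated identity.

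Finally, I would iterate the recursion from the base case $n=k-1$. At $n=k-1$ every vertex of $\mathfrak{m}$ is labeled, so the shape condition on unlabeled vertices is vacuous and $\eta_0(k-1,k)=\epsilon_0(k-1)={\rm Cat}(k-1)$. Iterating upward yields a telescoping product that collapses to $\binom{k-1}{n+1-k}{\rm Cat}(k-1)$; equivalently, this closed form can be verified directly against the recursion via a single binomial absorption identity. The main obstacle is the first step: cleanly matching R\'emy's two-branch case analysis (leaf versus non-leaf insertion) with the labeled/unlabeled dichotomy for the newly inserted vertex, and confirming that the shape-sector conditions of the preceding lemma exactly describe the non-leaf branch that remains inside $\mathbb{S}^{(k)}_{0,n+1}$. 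Once this compatibility is pinned down, the recursion and its iteration are routine.
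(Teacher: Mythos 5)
Your proposal is correct and follows essentially the same route as the paper: restrict R\'emy's bijection (non-leaf insertion of an unlabeled vertex) to shape-sectors, use the preceding lemma's count $2k-n-2$, and reduce to the fully labeled base case $\eta_0(k-1,k)={\rm Cat}(k-1)$; the paper simply phrases the iteration as choosing $n+1-k$ of the $k-1$ shape-sectors of the base tree rather than telescoping the one-step recursion. One remark: a careful double count of $\rho$ gives $n+2-k$ unlabeled vertices per tree in $\mathbb{S}^{(k)}_{0,n+1}$ (it has $n+2$ vertices), so the recursion is $(2k-n-2)\,\eta_0(n,k)=(n+2-k)\,\eta_0(n+1,k)$, which is exactly what the closed form $\binom{k-1}{n+1-k}{\rm Cat}(k-1)$ requires — the factor $n+1-k$ in the displayed statement is an off-by-one, not a flaw in your argument.
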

%%%
%%%%%%%%%%%%%%%%%%%%%%%%%%%%%%%%%%%%%%%%%%%%%%%%%%%%%%%%%%%%%%%%%%%%%%%%%%%%%%%%%%%%%%%%%%
%%%
\begin{proof}
The corollary follows by restriction of R\'{e}my's bijection. This bijection implies
that the removal of an unlabeled vertex of a $\mathbb{S}^{(k)}_{0,n}$-tree produces
a shape-sector. Furthermore, the order of such removals is irrelevant. Therefore, a
$\mathbb{S}^{(k)}_{0,n}$-tree can be constructed from a $\mathbb{S}^{(k)}_{0,m-1}$-tree together
with $(2k-n+1)$ shape preserved sectors. Clearly, the number of $\mathbb{S}^{(k)}_{0,k-1}$-trees
equals ${\rm Cat}(k-1)$, where ${\rm Cat}(n)$ is the $n$-th Catalan number given by
$\frac{1}{n+1}{2n \choose n}$. To obtain a $\mathbb{S}^{(k)}_{0,n}$-tree, we need to
insert $n+1-k$ unlabeled vertices. Choosing $n+1-k$ out of $2k-(k-1)-2$ shape preserved
sectors from $\mathbb{S}^{(k)}_{0,k-1}$, we derive
$$
\eta_0(n,k)= {2k-(k-1)-2 \choose n+1-k} \eta_0(k-1,k)
=  {k-1 \choose n+1-k} {\rm Cat}(k-1),
$$
whence the corollary.
\end{proof}

In particular, the number of shape-sectors decreases by $1$, upon insertion of one,
unlabeled vertex and there are at most $2k-(k-1)-2=k-1$ insertions into a fully
labeled tree having $n$ edges. This provides another proof that for fixed topological
genus there are only finitely many shapes.

We next compute the shape polynomial
$S_g(z)=\sum_n s_g(n)z^g$ where $s_g$ is the number of shapes having $n$ arcs.
Note that $s_g(n)=\eta_g(n,0)$.

%%%
%%%%%%%%%%%%%%%%%%%%%%%%%%%%%%%%%%%%%%%%%%%%%%%%%%%%%%%%%%%%%%%%%%%%%%%%%%%%%%%%%%%%%%%%%%
%%%
\begin{theorem}\label{T:1.1}
The shape generating function is given by
\begin{equation}
S_g(z)= \sum_{t=0}^{g-1} \kappa_t^{(g)} z^{2g+t}(1+z)^{2g+t},
\end{equation}
where $\kappa_t^{(g)}=a_t^{(g)} {\rm Cat}(2g+t)$.
\end{theorem}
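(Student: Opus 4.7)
The plan is to simply combine the two main tools we have already built: Proposition~\ref{P:1.1}, which expresses $s_g(n)=\eta_g(n,0)$ as a linear combination of the $\eta_0(n,k)$'s, and Corollary~\ref{C:1.1}, which gives a closed-form binomial expression for $\eta_0(n,k)$. The rest is a straightforward generating-function computation.

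First, I would specialize Proposition~\ref{P:1.1} so that $s_g(n) = \sum_{t=0}^{g-1} a_t^{(g)}\,\eta_0(n,2g+t+1)$, then substitute $k=2g+t+1$ into Corollary~\ref{C:1.1} to obtain
\begin{equation*}
\eta_0(n,2g+t+1)=\binom{2g+t}{n-2g-t}\,{\rm Cat}(2g+t).
\end{equation*}
Multiplying through by $a_t^{(g)}$ absorbs the Catalan factor into $\kappa_t^{(g)}=a_t^{(g)}{\rm Cat}(2g+t)$, giving
\begin{equation*}
s_g(n)=\sum_{t=0}^{g-1}\kappa_t^{(g)}\binom{2g+t}{n-2g-t}.
\end{equation*}

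Next, I would form the generating function $S_g(z)=\sum_{n\ge 0} s_g(n)\,z^n$, interchange the (finite outer, finite inner) sums, and evaluate the inner one. Shifting the index via $m=n-2g-t$, the inner sum becomes
\begin{equation*}
\sum_{m=0}^{2g+t}\binom{2g+t}{m}z^{m+2g+t}=z^{2g+t}(1+z)^{2g+t}
\end{equation*}
by the binomial theorem. Assembling the pieces yields exactly $S_g(z)=\sum_{t=0}^{g-1}\kappa_t^{(g)}\,z^{2g+t}(1+z)^{2g+t}$, which is the claim.

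There is essentially no hard step here; the only thing to watch out for is the bookkeeping between the two indices ($t$ ranging from $0$ to $g-1$ with labeled-vertex count $k=2g+t+1$, versus the exponent $2g+t$ that appears in the final polynomial). The heavy lifting has already been done upstream: the splicing recursion gave us $a_t^{(g)}$ in Theorem~\ref{T:1.0}, and the restriction of R\'emy's bijection to shape-trees in Corollary~\ref{C:1.1} converted the enumeration of partially labelled planar trees into a clean binomial coefficient, which is precisely what makes the generating function collapse to the finite polynomial $z^{2g+t}(1+z)^{2g+t}$ and confirms a posteriori that there are only finitely many shapes of fixed genus.
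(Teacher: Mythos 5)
Your proposal is correct and follows essentially the same route as the paper: substitute Proposition~\ref{P:1.1} for $s_g(n)=\eta_g(n,0)$, use Corollary~\ref{C:1.1} with $k=2g+t+1$ to get $\eta_0(n,2g+t+1)=\binom{2g+t}{n-(2g+t)}{\rm Cat}(2g+t)$, and sum over $n$ via the binomial theorem to obtain $z^{2g+t}(1+z)^{2g+t}$. The index bookkeeping you flag is handled identically in the paper, so there is nothing to add.
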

%%%
%%%%%%%%%%%%%%%%%%%%%%%%%%%%%%%%%%%%%%%%%%%%%%%%%%%%%%%%%%%%%%%%%%%%%%%%%%%%%%%%%%%%%%%%%%
%%%
\begin{proof}
Since $s_g(n)=\eta_g(n,0)$ we have
$$
S_g(z)= \sum_n \eta_g(n,0) z^n = \sum_n (\sum_{t=0}^{g-1} a_t^{(g)} \eta_0(n,2g+t+1) ) z^n.
$$
By Corollary~\ref{C:1.1} we can express the terms $\eta_0(n,2g+t+1)$,
\begin{eqnarray*}
S_g(z) & = & \sum_n (\sum_{t=0}^{g-1} a_t^{(g)} \eta_0(n,2g+t+1) ) z^n \\
& = & \sum_n \sum_{t=0}^{g-1} a_t^{(g)} {2g+t \choose n-(2g+t)} {\rm Cat}(2g+t) z^n \\
&=& \sum_{t=0}^{g-1} \kappa_t^{(g)} \left(z^{2g+t}
\sum_{n=2g+t}^{2(2g+t)} {2g+t \choose n-(2g+t)} z^{n-(2g+t)} \right) \\
&=& \sum_{t=0}^{g-1} \kappa_t^{(g)} z^{2g+t}(1+z)^{2g+t},
\end{eqnarray*}
where $\kappa_t^{(g)}=a_t^{(g)} \cdot {\rm Cat}(2g+t)$.
\end{proof}

%%%
%%%%%%%%%%%%%%%%%%%%%%%%%%%%%%%%%%%%%%%%%%%%%%%%%%%%%%%%%%%%%%%%%%%%%%%%%%%%%%%%%
%%%
\section{Uniform generation}\label{S:Gen}
%%%
%%%%%%%%%%%%%%%%%%%%%%%%%%%%%%%%%%%%%%%%%%%%%%%%%%%%%%%%%%%%%%%%%%%%%%%%%%%%%%%%%
%%%

In this section we present an algorithm that generates shapes of fixed
genus $g$. Since the generating function of shapes is a polynomial,
a shape of fixed topological genus has only finitely many arcs.
In fact we have $2g\le n \le 3g-1$
The probability of having exactly $n$ arcs in a
shape of fixed genus $g$ is given by
\begin{equation}
\mathbb{P}_g(n) = \frac{s_g(n)}{\sum_{t=0}^{g-1}s_g(2g+t)},
\end{equation}
where $s_g(n)$ is the coefficients in $S_g(z)$.
In the following we generate shapes of fixed topological genus $g$ and fixed
number of arcs $n$, where $2g\le n \le 3g-1$.

In Section~\ref{S:shape} we have shown that a shape of genus $g$ is obtained
by gluing $k$ labeled vertices contained in a partially labelled tree having
$n$ edges, where $2g+1\le k\le 3g$. Furthermore, this partially labelled tree
is constructed by R\'{e}my's bijection restricted to shape-sectors.
The corresponding probability of this event reads
$$
\mathbb{P}_{g}(n,k) = \frac{a_{k-2g-1}^{(g)} \eta_0(n, k)}
                                 {\sum_{t=0}^{g-1}{a_t^{(g)} \eta_0(n, 2g+t+1)}}.
$$

The algorithm generates a shape of genus $g$ in three steps:
\begin{itemize}
\item first we uniformly generate a tree with $k$ vertices \cite{SecondaryStructureSampling},
\item secondly we insert $n+1-k$ unlabeled vertices by R\'{e}my's procedure in shape-sectors.
      Since all vertices in the tree are labeled, there are $k-1$ shape-sectors.
      We then uniformly select $n+1-k$ from the $k-1$ shape-sectors and insert
      unlabeled vertices by R\'{e}my's procedure. The probability of a particular
      selection is given by $1/{k-1 \choose n+1-k}$,
\item thirdly we select uniformly a particular glue path.
      In Section~\ref{S:shape} we established the trace, i.e.~a
      sequence of pairs $(g_i,t_i)$, such that $g_i<g_{i+1}$ and $t_{i+1}-t_i$
      being equal to either $0$ or $1$.
      Suppose the number of labeled vertices is $k$ and $t=k-2g-1$.
      We construct the sequence inductively, with initial status $(g_0=0, t_0=0)$
      and terminate in case of $(g_r=g, t_r=r-t)$ for some integer $r$.
      Let
      $$
      \mathbb{P}_{\text{step}}(i|i-1)= \frac{1}{2g_i} \cdot {2g+t-(2g_{i-1}+(i-1))+t_i \choose 2(g_i-g_{i-1})+1}.
      $$
      Then we have
\begin{eqnarray*}
      & & \mathbb{P}((g_i, t_i)|(g_{i-1},t_{i-1}))=  \\
      && \frac
      {\mathbb{P}_{\text{step}}(i|i-1) \cdot
      \sum_{g_{i+1}<\cdots < g_r=g \atop t_{i+1}\le \cdots \le t_r=r-t}
      \prod_{\ell=i+1}^r \frac{1}{2g_{\ell}}{2g+t-(2g_{\ell-1}+(\ell-1))+t_\ell \choose
      2(g_\ell-g_{\ell-1})+1}}
      {\sum_{g_i<\cdots < g_r=g \atop t_i\le \cdots \le t_r=r-t}
      \prod_{\ell=i}^r \frac{1}{2g_{\ell}}{2g+t-(2g_{\ell-1}+(\ell-1))+t_\ell \choose 2(g_\ell-g_{\ell-1})+1}}.
      \end{eqnarray*}
      We accordingly derive
      \begin{eqnarray*}
      & & \prod_{i=1}^r \mathbb{P}((g_i, t_i)| (g_{i-1},t_{i-1}))= \\
      & &\frac{\prod_{i=1}^r \mathbb{P}_{\text{step}}(i|i-1)}
      {\sum_{0=g_0<g_1<\cdots < g_r=g \atop 0=t_0=t_1\le t_2\le \cdots \le t_r=r-t}
      \prod_{i=1}^r \frac{1}{2g_{i}}{2g+t-(2g_{i-1}+(i-1))+t_i \choose 2(g_i-g_{i-1})+1}}.
      \end{eqnarray*}
\item finally we realize the $(g_i, t_i)_i$-glue path by selecting vertices.
      Suppose we have at step $i-1$ the labeled shape
      $\mathfrak{s}^{(k_{i-1})}_{g_{i-1},n}$.
      Then we select $2(g_{i}-g_{i-1})+1$ from the $k_{i-1}$ labeled vertices uniformly
      and glue them via $\Lambda$. There are ${2g+t-2(g_{i-1}+(i-1))+t_i \choose
      2(g_i-g_{i-1})+1}$ ways to choose these vertices uniformly, which generates the
      $\mathfrak{s}^{(k_{i})}_{g_{i},n}$ together with a labeled trisection.
      Since there are exactly $2g_{i}$ trisections in $\mathfrak{s}^{(k_{i})}_{g_{i},n}$,
      there are $2g_{i}$ glue paths generating the same configuration.
      Therefore, erasing the label of the trisections induces each glue path with
      a $1/2g_{i}$ factor.  Accordingly there are
$$
\prod_{i=1}^r \mathbb{P}_{\text{step}}(i|i-1)
$$
      paths with trace $((g_i,t_i))_{i=1}^r$ from a fixed $\mathfrak{s}^{(2g+t)}_{0,n}$ to
      $\mathfrak{s}^{(0)}_{g,n}$.
     \begin{eqnarray*}
      \frac{\prod_{i=1}^r \mathbb{P}((g_i, t_i)| (g_{i-1},t_{i-1}))}{
     \prod_{i=1}^r \mathbb{P}_{\text{step}}(i|i-1)}
      = \frac{1}{a_t^{(g)}}.
      \end{eqnarray*}
\end{itemize}

The above process can be formally expressed as follows:
\begin{algorithm}
\begin{algorithmic}[1]
\STATE {\tt UniformShape}~($TargetGenus$)
\STATE {$n \leftarrow {\tt NumberOfArcs}(g)$}
\STATE {$k \leftarrow {\tt NumberOfLabel}(n, g)$}
\STATE {$t \leftarrow k-2g$}
\STATE {$\mathfrak{s}_{0,k-1} \leftarrow {\tt UnifomTree}(k-1)$}
\STATE {$\mathfrak{s}_{0,n}^{(2g+t)} \leftarrow {\tt InsertUnlabeled}(\mathfrak{s}_{0,k-1}, n-k+1)$}
\STATE {$i\leftarrow 1$}
\STATE {$(g_0, t_0) \leftarrow (0,0)$}
\WHILE {$g_i \le TargetGenus$}
\STATE {$(g_{i}, t_{i}) \leftarrow {\tt NextGenus}~((g_j, t_j)_{0\le
j < i}, TargetGenus)$}
\STATE {$i\leftarrow i+1$}
\ENDWHILE
\STATE {$i\leftarrow 1$}
\WHILE {$g_i \le TargetGenus$}
\STATE {$V_{i-1} \leftarrow {\tt SelectVertex}~(\mathfrak{s}^{(2g+t-(2g_{i-1}+(i-1))+t_{i})}_{g_{i-1} ,n},
2(g_{i}-g_{i-1})+1)$}
\STATE {$\mathfrak{s}^{(2g+t-(2g_{i}+(i))+t_{i+1})}_{g_{i} ,n}
\leftarrow
{\tt Glue}~(\mathfrak{s}^{(2g+t-(2g_{i-1}+(i-1))+t_i)}_{g_{i-1} ,n}, V_i,
t_{i+1})$}
\STATE {$i\leftarrow i+1$}
\ENDWHILE
\STATE \textbf{return} $\mathfrak{s}^{(0)}_{g,n}$
\caption {\small }
\label{A:shape}
\end{algorithmic}
\end{algorithm}
%%%

The next proposition is implied by Proposition~\ref{P:1.1} and Corollary~\ref{C:1.1}:

%%%
%%%%%%%%%%%%%%%%%%%%%%%%%%%%%%%%%%%%%%%%%%%%%%%%%%%%%%%%%%%%%%%%%%%%%%%%%%%%%%%%%%%%%%%%%%%%%
%%%
\begin{proposition}
The probability of a shape generated by Algorithm~\ref{A:shape} is $1/\eta_g(n,0)$, i.e.~the
algorithm generates shapes of genus $g$ uniformly.
\end{proposition}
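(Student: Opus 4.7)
The plan is to compute, for any fixed shape $\mathfrak{s}_{g,n}^{(0)}$, the probability that Algorithm~\ref{A:shape} outputs $\mathfrak{s}$ and verify that it equals $1/\eta_g(n,0)$, independently of $\mathfrak{s}$. The argument combines the probability bookkeeping already carried out in the algorithm's discussion with Proposition~\ref{P:1.1}, Corollary~\ref{C:1.1}, and a final averaging identity over trisections.

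First I would collect the probabilities of the algorithm's independent random choices and write down, conditionally on $n$, the joint probability of producing a specific (tree, trace, vertex-selection sequence) triple. By Corollary~\ref{C:1.1}, the tree-generation factor $\frac{1}{{\rm Cat}(k-1)}\cdot\frac{1}{\binom{k-1}{n+1-k}}$ equals $1/\eta_0(n,k)$, which cancels against the $\eta_0(n,k)$ inside the label-count weight $a_{k-2g-1}^{(g)}\eta_0(n,k)/\eta_g(n,0)$. The product of the trace and vertex-selection probabilities collapses, via the cancellation identity $\prod_i \mathbb{P}((g_i,t_i)|(g_{i-1},t_{i-1})) = (1/a_t^{(g)})\prod_i \mathbb{P}_{\text{step}}(i|i-1)$ from the algorithm's discussion together with $\mathbb{P}_{\text{step}}(i|i-1)=\frac{1}{2g_i}\binom{N_i}{2(g_i-g_{i-1})+1}$, to a factor $1/(a_t^{(g)}\prod_i 2g_i)$. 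After the $a_t^{(g)}$ terms cancel, the per-triple probability simplifies to
\[
\mathbb{P}(\text{tree, trace, }V \mid n) = \frac{1}{\eta_g(n,0)\prod_{i=1}^r 2g_i}.
\]

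Second, I would sum this probability over all (tree, trace, $V$) triples producing $\mathfrak{s}$. By iterated application of Chapuy's bijection (Theorem~\ref{T:bij}), restricted to $\mathbb{S}^{(k)}_{g,n}$-maps via the compatibility observed at the start of Section~\ref{S:shape}, these preimages are in bijection with trisection-sequences $(\tau_0,\tau_1,\ldots,\tau_{r-1})$, where $\tau_0$ is a trisection of $\mathfrak{s}$ and each subsequent $\tau_j$ is a trisection of the intermediate genus-$g_{r-j}$ map produced after $j$ applications of $\Xi$. Since $\tau_j$ ranges over exactly $2g_{r-j}$ options, factoring the sum as an iterated average over independent trisection choices gives $\sum_{\text{sequences}} \prod_{i=1}^r (2g_i)^{-1} = 1$ for every shape, and hence $\mathbb{P}(\mathfrak{s}|n) = 1/\eta_g(n,0)$, proving the proposition.

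The main obstacle is the combinatorial picture underlying the telescoping identity of the preceding paragraph: one must verify that iterating $\Xi$ with freely chosen intermediate trisections genuinely terminates at a tree and that every such sequence corresponds bijectively to one (tree, trace, $V$) triple produced by the algorithm. Both facts follow from the bijectivity of $\Lambda$ and $\Xi$ in Theorem~\ref{T:bij}, combined with the shape-preserving property of these maps on unlabeled vertices established in the opening of Section~\ref{S:shape}, which ensures the iterated bijection restricts cleanly to shape-trees.
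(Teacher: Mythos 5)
Your proposal is correct and takes essentially the same route as the paper's own proof: the identical cancellations of the tree-generation factor $\frac{1}{{\rm Cat}(k-1)}\cdot\frac{1}{\binom{k-1}{n+1-k}}=\frac{1}{\eta_0(n,k)}$ against the weight $a_t^{(g)}\eta_0(n,k)/\eta_g(n,0)$ (via Corollary~\ref{C:1.1} and Proposition~\ref{P:1.1}) and of the trace probability against $a_t^{(g)}$. The only difference is presentational: you spell out, via the iterated Chapuy bijection restricted to shape-trees and the telescoping identity $\sum\prod_i(2g_i)^{-1}=1$ over trisection sequences, the aggregation over all preimage runs (possibly with different $k$ and different trees), which the paper compresses into the assertion that the probability of arriving at $\mathfrak{s}^{(0)}_{g,n}$ from $\mathfrak{s}^{(2g+t)}_{0,n}$ equals $1/a_t^{(g)}$.
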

%%%
%%%%%%%%%%%%%%%%%%%%%%%%%%%%%%%%%%%%%%%%%%%%%%%%%%%%%%%%%%%%%%%%%%%%%%%%%%%%%%%%%%%%%%%%%%%%%
%%%
\begin{proof}
By construction, the probability of arriving at $\mathfrak{s}^{(0)}_{g,n}$ from
$\mathfrak{s}^{(2g+t)}_{0,n}$ is given by $1/a_t^{(g)}$.
The probability to glue from a $\mathfrak{s}^{(2g+t)}_{0,n}$-shape equals
$\mathbb{P}_{g, n}(k)$ and is a result of eq.~(\ref{E:kk}).
In view of Corollary~\ref{C:1.1}, which expresses $\eta_0(n,k)$
as ${k-1 \choose n+1-k} {\rm Cat}(k-1)$, the probability of a particular shape
$\mathfrak{s}^{(0)}_{g,n}$, generated by Algorithm~\ref{A:shape} is given by
\begin{eqnarray*}
\mathbb{P}(\mathfrak{s}^{(0)}_{g,n}) & = &\mathbb{P}_{g, n}(k)\cdot
\frac{1}{{\rm Cat}(k-1)} \cdot  \frac{1}{{k-1 \choose n+1-k}} \cdot
\frac{1}{a_t^{(g)}} \\
& = & \frac{a_t^{(g)} \eta_0(n, k)} {\sum_{t=0}^{g-1}{a_t^{(g)} \eta_0(n, 2g+t+1)}} \cdot
\frac{1}{{\rm Cat}(k-1){k-1 \choose n+1-k}} \cdot \frac{1}{a_t^{(g)}} \\
& = & \frac{1}{\sum_{t=0}^{g-1}{a_t^{(g)} \eta_0(n, 2g+t+1)}} \\
& = & \frac{1}{\eta_g(n,0)}.
\end{eqnarray*}
\end{proof}

%%%
%%%%%%%%%%%%%%%%%%%%%%%%%%%%%%%%%%%%%%%%%%%%%%%%%%%%%%%%%%%%%%%%%%%%%%%%%%%%%%%%%%%%%%%%%%%%%%%
%%%

\section{Discussion}

%%%
%%%%%%%%%%%%%%%%%%%%%%%%%%%%%%%%%%%%%%%%%%%%%%%%%%%%%%%%%%%%%%%%%%%%%%%%%%%%%%%%%%%%%%%%%%%%%%%
%%%

In this paper, we studied shapes of RNA structures. While topologically motivated by taking
homotopy classes of arcs, shapes have a simple combinatorial interpretation and there are,
for fixed genus, only finitely many of them.

Topological folding algorithms like \cite{Reidys:11a} show that shapes determine the
grammar of the multiple-context free language of topological RNA structures of
fixed topological genus. It is therefore of interest to compute shapes of genus $g$,
effectively. This connection also makes concrete how the topology of RNA structures
characterizes their language and reiterates the fact that RNA shapes carry genuinely
key information of RNA structures.

Two questions immediately arise. First, can we compute the generating polynomial for arbitrary
topological genus with preferably explicit expressions for the coefficients. The latter are
of significance as they represent the number of shapes of genus $g$ with $n$ arcs. Secondly,
can we actually generate these shapes--i.e.~how do they look like as diagrams? This allows
us to derive a plethora of nontrivial statistics of shape. Both questions are answered
affirmatively in this paper.

As for the first, in Section~\ref{S:shape}, Theorem~\ref{T:1.1} we compute the shape polynomial for
fixed topological genus. As for the second, we specify in Section~\ref{S:Gen} an algorithm that
uniformly generates shape of fixed genus $g$ in linear time. We also implement
Algorithm~\ref{A:shape} and its source code is available at
\begin{center}
\url{http://imada.sdu.dk/~duck/shape.c}
\end{center}

To illustrate uniformity, we display in Figure~\ref{F:uniformshape} the multiplicities of shapes
of genus $2$ obtained by Algorithm~\ref{A:shape} and the Binomial coefficients
${N \choose \ell}(1/\sigma_2)^\ell(1-1/\sigma_2)^{N-\ell}$.

%%%%%%%%%%
\begin{figure}[ht]
\begin{center}
\includegraphics[width=0.55\columnwidth]{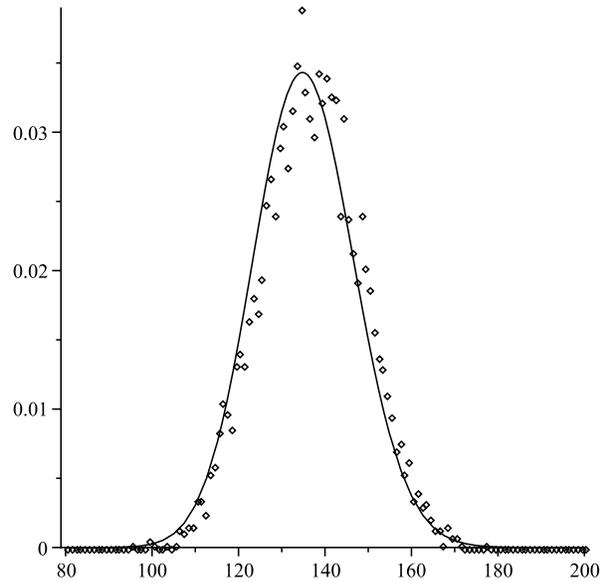}
\end{center}
\caption{\small Uniform generation of shapes:
We generate $N=5\times 10^5$ shapes and display the frequencies of their multiplicities
(dots) together with the Binomial coefficients of the uniform sampling
${N \choose \ell}(1/\sigma_2)^\ell(1-1/\sigma_2)^{N-\ell}$ (solid curve),
where $\sigma_2=3696$ is the number of shapes of genus $2$.
}
\label{F:uniformshape}
\end{figure}
%%%%%%%%%%

We next discuss how to use shapes in order to extract key information from databases.
Let us begin with an experiment: we uniformly sample RNA structures of length $200$
having genus $2$ and study the frequencies of their associated shapes.
We observe that first shapes of the same length remains uniformly distributed, see
Figure\ref{F:structure-shape} (A).
Secondly, the distribution of shapes of different length follows the distribution
of the coefficients in the shape polynomial, see Figure~\ref{F:structure-shape} (B).

%%%%%%%%%%
\begin{figure}[ht]
\begin{center}
\includegraphics[width=0.85\columnwidth]{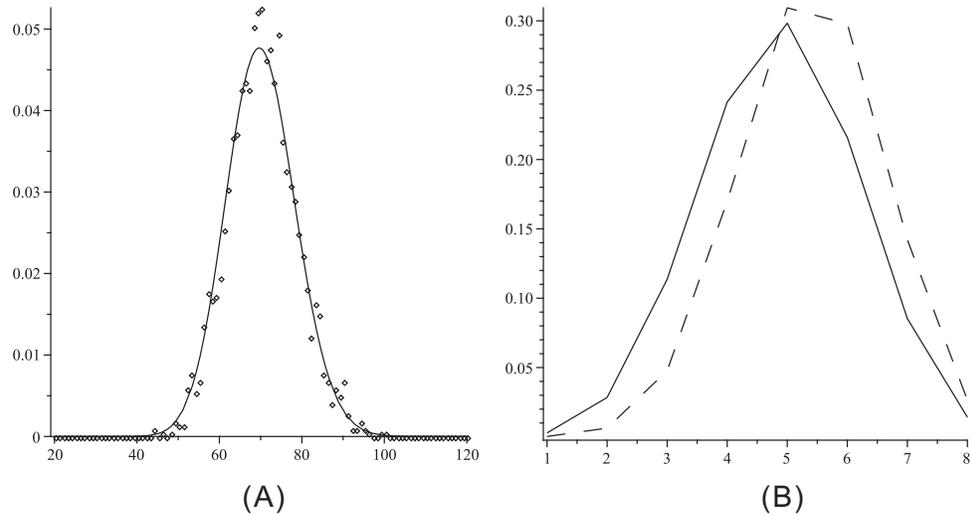}
\end{center}
\caption{\small Uniform sampling of structures of length $200$ of genus $2$,
(over $5\times 10^5$ in total) and display: the multiplicities of shapes with length
$16$ (A) and the multiplicities of shapes of different lengths (B).
The solid curve displays the distribution induced by the coefficients of the shape
polynomial, while the dash curve displays the distribution of the sampling process.
}
\label{F:structure-shape}
\end{figure}
%%%%%%%%%%

This observation motivates to extract such shape multiplicities from a database of
RNA-structures and to use this in order to generate RNA structures from shapes using
the latter, see Figure~\ref{F:database}, where we display the number of RNA pseudoknot
structures (PKB1--PKB304) found in Pseudobase \cite{Taufer:09} having a particular shape.
This idea allows to reduce database information succinctly in form of a novel
polynomial whose coefficients are the multiplicities of shapes of fixed length.
In particular, in case of uniform RNA structures this method would recover the shape
polynomial itself.

%%%%%%%%%%
\begin{figure}[ht]
\begin{center}
\includegraphics[width=0.9\columnwidth]{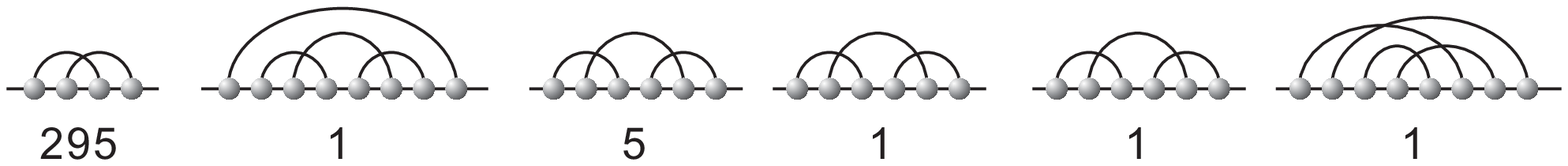}
\end{center}
\caption{\small Shape-multiplicities of Pseudobase PKB1--PKB304 \cite{Taufer:09}.
}
\label{F:database}
\end{figure}
%%%%%%%%%%

While the uniform generation algorithm of shapes is of best possible time complexity, it
does generate, strictly speaking, {\it labeled} shapes. That is, shapes with a
distinct trisection. Our next objective is to work on obtaining fully bijective construction
methods for unlabeled shapes, i.e.~explicit algorithms that derive inductively shapes without
encountering the multiplicity $2g$. Ultimately this does not matter for applications
(nor the time complexity) of the uniform generation {\it per se} since each labeled shape is
generated with the same, finite, multiplicity ($2g$). However, it would be interesting to identify
a construction method for unlabeled shapes.

\section{ Acknowledgments.}
We acknowledge the financial support of the Future and Emerging
Technologies (FET) programme within the Seventh Framework Programme (FP7) for
Research of the European Commission, under the FET-Proactive grant agreement
TOPDRIM, number FP7-ICT-318121.

\bibliographystyle{plain}
\bibliography{shape}

\end{document}